\theoremstyle{plain}
\newtheorem{Theorem}{Theorem}[section]
\newtheorem{Lemma}[Theorem]{Lemma}
\newtheorem{Proposition}[Theorem]{Proposition}
\newtheorem{corollary}[Theorem]{Corollary}
\newtheorem{question}[Theorem]{Question}
\newtheorem*{definition}{Definition}
\newtheorem*{remark}{Remark}
\newcommand{\footremember}[2]{%
	\footnote{#2}
	\newcounter{#1}
	\setcounter{#1}{\value{footnote}}%
}
\newcommand{\footrecall}[1]{%
	\footnotemark[\value{#1}]%
} 
\title{Measures of maximal entropy of bounded density shifts}
\author{Felipe Garc\'ia-Ramos \footremember{UASLP}{Institute of Physics, Universidad Aut\'onoma de San Luis Potos\'i} \footremember{JU}{Institute of Mathematics, Uniwersytet Jagiello\'nski} \and Ronnie Pavlov \footremember{DU}{Department of Mathematics, University of Denver} \and Carlos Reyes \footrecall{UASLP}} 
\begin{document}
	\maketitle
	\begin{abstract}
		We find sufficient conditions for bounded density shifts to have a unique measure of maximal entropy. We also prove that every measure of maximal entropy of a bounded density shift is fully supported. As a consequence of this we obtain that bounded density shifts are surjunctive.
	\end{abstract}

	\section{Introduction}
	
	The concept of entropy is of particular interest when trying to define formally how a system behaves at equilibrium. Given a dynamical system, we say that an invariant measure is a uniform equilibrium state if it achieves the maximal possible entropy. 
	It has been of interest to physicist and mathematicians to determine whether a system has a unique equilibrium state or not. When this happens some mathematicians say the system is intrinsically ergodic and some physicist say the system does not have phase transition. 
	
	%In this paper we are interested in trying to determine if bounded density shifts are intrinsically ergodic. Bounded density shifts were introduced by Stanley in \cite{stanley}. These subshifts are defined similarly to $\beta$-shift but uses a function instead of a codification of a numerical $\beta$-expansion and a counting order to compare words instead of lexicographic. Bounded density shifts and $\beta$-shifts are hereditary \cite{kwietniak2011topological}. 
	In this paper we are interested in trying to determine if bounded density shifts are intrinsically ergodic. Bounded density shifts were introduced by Stanley in \cite{stanley}. These subshifts are defined somewhat similarly to the classical $\beta$-shifts in that they both are hereditary (\cite{kwietniak2011topological}), meaning that membership in the shift is preserved under coordinatewise reduction of letters. Whereas $\beta$-shifts are `bounded from above' by a specific sequence coming from a $\beta$-expansion, bounded density shifts are restricted by length-dependent bounds on the sums of letters in subwords.
	
	Stanley proved characterizations of when bounded density shifts are shifts of finite type, sofic, or specified which are remarkably similar to those proved in \cite{schmeling} for $\beta$-shifts.
	
	%When trying to characterize when a bounded density shift is a shift of finite type, sofic or specified, Stanley proved remarkably similar results as that of $\beta$-shifts.
	
	A very effective way of proving that a transitive $\beta$-shift is intrinsically ergodic is using the Climenhaga-Thompson decomposition \cite{especificacion} (see Section 2.3), which uses specification of a sub-language. Using this powerful result one can prove that $\beta$-shifts (and their factors) are intrinsically ergodic in a few lines (see \cite[Section 3.1]{especificacion}). 
	
	Proving that bounded density shifts are intrinsically ergodic seems more mysterious. In this paper we also use Climenhaga-Thompson's theorem theorem to prove a fairly general sufficient condition (Theorem~\ref{thm:main}). The main application of the result is the following. 
	\begin{Theorem}
		Let $X_f\subset \{0,...,m\}^{\mathbb{Z}}$ be a bounded density shift and $\alpha_{f}$ its limiting gradient. If $\alpha_{f} > \sum_{i=1}^{ m } \frac{i}{i+1}$ then $X_f$ is intrinsically ergodic.   
	\end{Theorem}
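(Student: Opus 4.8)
The plan is to obtain the statement as a consequence of the general sufficient condition, Theorem~\ref{thm:main}, so that the only real task is to check that its hypotheses hold whenever $\alpha_f > \sum_{i=1}^m \frac{i}{i+1}$. Since that condition rests on the Climenhaga--Thompson decomposition \cite{especificacion}, unwinding it means producing a factorization $\mathcal{L}(X_f) = \mathcal{C}^p \mathcal{G} \mathcal{C}^s$ of the language in which the core $\mathcal{G}$ has specification and the collection $\mathcal{C}^p \cup \mathcal{C}^s$ of obstructions has topological entropy strictly below $h(X_f)$. I would first set up the core and dispose of the specification, which is the soft part, and then concentrate all of the effort on the entropy estimate.

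For the core I would fix a density level $\beta$ with $\sum_{i=1}^m \frac{i}{i+1} < \beta < \alpha_f$ and let $\mathcal{G}$ consist of those words all of whose prefixes and suffixes have average letter-sum at most $\beta$. Such a $\mathcal{G}$ is essentially closed under concatenation: if $u,v \in \mathcal{G}$, then any window straddling the junction of $uv$ has letter-sum at most $\beta$ times its length, which, since $\beta < \alpha_f$ and $f(\ell) \ge \alpha_f \ell$, lies within the bound allowed by $f$; heredity guarantees that a short block of $0$'s may be inserted whenever a bounded gap is needed, so $\mathcal{G}$ has specification. The factorization is then produced by peeling off from each $w \in \mathcal{L}(X_f)$ the extremal prefix and suffix responsible for violating the level $\beta$, a maximal-excursion argument that deposits the removed pieces into $\mathcal{C}^p$ and $\mathcal{C}^s$ while leaving the core in $\mathcal{G}$.

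Everything therefore comes down to showing $h(\mathcal{C}^p \cup \mathcal{C}^s) < h(X_f)$, and this is the step I expect to be the main obstacle. The obstructions are words in $X_f$ of density at least $\beta$, so their growth rate is controlled by the variational quantity $\sup\{h(\mu) : \mu \in \mathcal{M}_\sigma(X_f),\ \int x_0\, d\mu \ge \beta\}$, and I must prove this is strictly smaller than $h(X_f) = \sup_\mu h(\mu)$; equivalently, that no measure of maximal entropy of $X_f$ is forced to the maximal density $\alpha_f$. The constant $\sum_{i=1}^m \frac{i}{i+1}$ is precisely the critical density at which this dichotomy turns over. I expect it to emerge from the hereditary layer decomposition $x \mapsto (\mathbf{1}[x \ge j])_{j=1}^m$ into binary bounded density shifts: the admissible level densities $\mathbb{P}(x_0 \ge j)$, constrained by the nesting $\mathbb{P}(x_0\ge 1) \ge \cdots \ge \mathbb{P}(x_0 \ge m)$, are extremized at the critical profile $\mathbb{P}(x_0 \ge j) = \frac{m-j+1}{m-j+2}$, whose total density $\sum_{j=1}^m \frac{m-j+1}{m-j+2} = \sum_{i=1}^m \frac{i}{i+1}$ is exactly the threshold. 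Once $\alpha_f$ exceeds this value, the entropy-maximizing measure has density bounded away from $\alpha_f$, the strict entropy gap holds for a suitable $\beta$, and Theorem~\ref{thm:main} then delivers a unique measure of maximal entropy, that is, intrinsic ergodicity. The delicacy is that $h(X_f)$ depends on the full shape of $f$ and not only on $\alpha_f$, so the gap must be established uniformly over all admissible $f$ with the prescribed limiting gradient.
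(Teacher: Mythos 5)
Your opening plan---reduce to Theorem~\ref{thm:main} by checking that every measure of maximal entropy has density $\sum_{i=1}^{m} i\,\mu([i]_0)$ strictly below $\alpha_f$---is exactly the paper's strategy, and the pieces you sketch around it (the decomposition into high-density prefixes/suffixes and a low-density core, specification of the core via subadditivity of the density bound, and the control of the obstruction words' growth rate by $\sup\{h_\nu : \int x_0\, d\nu \geq \beta\}$) correspond to Lemma~\ref{LemaLenguaje}, Lemma~\ref{lemaespecifiacion}, and Proposition~\ref{lemaentropia}. But the single step that carries all the content is missing: you never actually prove that a measure of maximal entropy has density at most $\sum_{i=1}^m \frac{i}{i+1}$. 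Your proposed justification---that the level densities $\mathbb{P}(x_0 \geq j)$ are ``constrained by the nesting $\mathbb{P}(x_0 \geq 1) \geq \cdots \geq \mathbb{P}(x_0 \geq m)$'' and are ``extremized at the critical profile $\mathbb{P}(x_0 \geq j) = \frac{m-j+1}{m-j+2}$''---is vacuous: the nesting inequalities hold for \emph{every} probability measure, since the events $\{x_0 \geq j\}$ are nested, so they impose no constraint whatsoever and force no profile bounded away from $1$. Nothing in your argument uses the maximal-entropy property, yet the desired bound is false for general invariant measures: whenever $\alpha_f > \sum_{i=1}^m \frac{i}{i+1}$ there exist invariant measures of density $\geq \alpha_f$ (this is precisely how Proposition~\ref{lemaentropia} produces the measure that kills $h(\mathcal{B})$). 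The claim that measures of maximal entropy must avoid high density is the genuinely hard part of the theorem; a strict form of it is stated as an open problem in the paper (Question~\ref{question}), so it cannot be waved through by an extremization heuristic.

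The paper closes this gap with an input your proposal never invokes: the extender-set theorem of Garc\'ia-Ramos and Pavlov (Theorem~\ref{GRP}, i.e.\ Corollary 4.6 of \cite{garcia2019extender}). Since $X_f$ is hereditary, replacing the single letter $i$ by $i-1$ can only enlarge the set of valid surroundings, so $E_{X_f}(i) \subseteq E_{X_f}(i-1)$; as both words have length one, the theorem gives $\mu([i]_0) \leq \mu([i-1]_0)$ for every measure of maximal entropy $\mu$. Monotonicity together with $\sum_{j=0}^{m} \mu([j]_0) = 1$ yields $\mu([i]_0) \leq \frac{1}{i+1}$, hence $\sum_{i=1}^{m} i\,\mu([i]_0) \leq \sum_{i=1}^{m} \frac{i}{i+1} < \alpha_f$, which is exactly the hypothesis of Theorem~\ref{thm:main}. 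Note that once this is in hand, your detour through an intermediate density level $\beta$ and a re-derivation of the Climenhaga--Thompson conditions is unnecessary---Theorem~\ref{thm:main} applies verbatim with the paper's $\mathcal{G}$ and $\mathcal{B}$ defined at level $\alpha_f$---and your closing worry about establishing the entropy gap ``uniformly over all admissible $f$'' dissolves: the bound is pointwise on each measure of maximal entropy of the given $X_f$ and never requires comparing entropies across different functions $f$.
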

	
	%and achieve success in a seemingly big family. In contrast with $\beta$-shifts, instead of a few lines it takes several pages to check that the decomposition holds. 
	
	%It is easy to find examples that satisfy our extra hypothesis (Corollary \ref{cor:simple}), and actually we do not know if every bounded density subshift satisfies it (Question \ref{question}). Actually we conjecture that the answer of this question is positive at least for binary subshifts and that every bounded density shift is intrinsically ergodic. 
	
	It is not difficult to find examples with this property, for a binary shift all we need is $\alpha_{f}>1/2$. Furthermore we do not know if any bounded density subshift fails to satisfy the conditions of Theorem~\ref{thm:main} (Question \ref{question}). %It is easy to find examples that satisfy our extra hypothesis (Corollary \ref{cor:simple}), and actually we do not know if every bounded density subshift satisfies it (Question \ref{question}). 
	We conjecture that the answer of this question is positive at least for binary subshifts and that every bounded density shift is intrinsically ergodic.
	
	This is not the first paper to study intrinsic ergodicity of bounded density shifts. This has been done in \cite{climenhaga2019one,pavlov2022subshifts}. Our hypotheses are also much simpler, and provide proofs of intrinsic ergodicity for new classes of bounded density shifts.
	
	%Our hypothesis are different to the ones in those papers, and we can provide the result for new examples. This is because our condition is tailor-made for bounded density shifts and  \cite{climenhaga2019one,pavlov2022subshifts} study conditions for general subshifts. 
	
	Furthermore, we prove that every measure of maximal entropy of a bounded density shift (with positive entropy) is fully supported. This property is sometimes known as entropy minimality because it is equivalent to having lower topological entropy on  every proper subshift. As a consequence of this we prove that synchronized bounded density shifts are always intrinsically ergodic, and we also obtain surjunctivity of bounded density shifts. In the last section of the paper we prove that these shifts possess universality properties.  \\
	%implies that every proper subshift has lower topological entropy. As a consequence of this we prove that synchronized bounded density shifts are always intrinsically ergodic, and we also obtain surjunctivity of bounded density shifts. 
	
	\emph{Acknowledgments}: {The authors would like to thank Dominik Kwietniak for rewarding conversations and insights, and the anonymous referee for their valuable comments. The first author was supported by the Excellence Initiative Strategic Program of the Jagiellonian University with grant number U1U/W16/NO/01.03. The second author gratefully acknowledges the support of a Simons Collaboration Grant.}
	
	\section{Definitions and preliminary results}
	%%% Hasta aquí los textos son exactamente los mismos. 
	\subsection{Subshifts}
	We devote this section to collect some basic definitions in symbolic dynamics. For a broader introduction to subshifts, languages and their properties, see \cite{Marcus}.  
	%% first commment of the referee
	Let $\mathcal{A}$ be a finite set of symbols. 
	We say that $w$ is a \emph{word} if there exists $n\in \mathbb{N}$ such that $w\in\mathcal{A}^n$ and we denote the \emph{length} of $w$ by $\vert w \vert$. Let $\varepsilon$ denote the empty word, i.e. the word with no symbols.

	A word $u$ is a \emph{subword} of $w$ if $u = w_kw_{k+1} \ldots w_{l}$ for some $1 \leq k \leq l \leq \vert w \vert$. For words $w^{(1)}, \ldots, w^{(n)}$, we use $w^{(1)} \ldots w^{(n)}$ to represent their concatenation.
	We say that a word $u$ is a \emph{prefix} of $w$ if $u = w_1 \ldots w_{k}$ for some $1 \leq k \leq \vert w \vert$ and a suffix if $u = w_k \ldots w_{\vert w \vert}$ for some $1\leq k \leq \vert w \vert$, denote by $\text{Suf}(w)$ and $\text{Pre}(w)$ the sets of nonempty suffixes and prefixes respectively for $w$. %%\rp{(NOTE: these notations are never used, I think we should delete these if that is the case.)}
	
	We endow $\mathcal{A}^\mathbb{Z}$ with the product topology. When describing a point $x\in\mathcal{A}^\mathbb{Z}$ as a sequence, we use a dot to indicate the central position as follows, $x = \ldots x_{-1}.x_{0}x_{1} \ldots$, where $x_i$ to represent the ith coordinate of $x$. {We represent intervals of integers with $[i,j]$, and $x_{[i,j]}=x_ix_{i+1}...x_j$.} 
	
	The shift map $\sigma : \mathcal{A}^\mathbb{Z} \rightarrow  \mathcal{A}^\mathbb{Z}$ is defined by $\sigma (x) = \ldots x_{-1}x_0.x_1x_2 \ldots$. We say that a set $X \subseteq \mathcal{A}^\mathbb{Z} $ is a \emph{subshift} if it is closed and invariant under $\sigma$. 
	
	For any subshift $X$, let $$\mathcal{L}_n(X)=\{ w\in \mathcal{A}^n:\exists x\in X\text{ and } i,j\in \mathbb{Z} \text{ s.t } x_{[i,j]} = w \}.$$
	%the set of words with length $n$ such that there exist $x \in X$ and $i,j\in \mathbb{Z}$ such that $x_{[i,j]} = w$. 
	We define $\mathcal{L}(X) = \bigcup_{i=0}^{\infty} \mathcal{L}_n(X)$ as the \emph{language} of the subshift $X$. Given a word $w$ and $k\in \mathbb{Z}$, we define its \emph{cylinder set} as $[w]_k = \lbrace x \in X : x_{[k,k+\vert w \vert -1]} = w \rbrace$. The cylinder sets form a basis of the topology of $\mathcal{A}^\mathbb{Z}$.  
	
	%There is a similar notation and theory if we consider one-way infinite sequences of $\mathcal{A}$ indexed by $\mathbb{N}$ instead of $\mathbb{Z}$. 
	
	\subsection{Specification properties}
	
	%Since there are several notions of specification in the literature, we use a variant of classical specification definition. They share the interpretation to approximate arbitrary segments of orbit with a single trajectory.
	A subshift $X$ is \emph{specified} if there exists $M \in \mathbb{N}$ such that for all $u,w \in \mathcal{L}(X)$, there is a $v \in \mathcal{L}_M(X)$ such that $uvw \in  \mathcal{L}(X)$. {Following \cite{especificacion}}, we also define specification for subsets of the language.

	Let $X$ be a subshift, $\mathcal{G} \subset \mathcal{L}(X)$ and $t \in \mathbb{N}_0$. We say that $\mathcal{G}$ has \emph{specification (with gap size $t$)} if for all $m \in \mathbb{N}$ and $w^{(1)}, \ldots , w^{(m)} \in \mathcal{G}$, there exists $v^{(1)}, \ldots , v^{(m-1)} \in \mathcal{L}_t(X)$ such that 
	$$w= w^{(1)}v^{(1)}w^{(2)}v^{(2)} \cdots v^{(m-1)}w^{(m)} \in \mathcal{L}(X).$$ Moreover, if the cylinder $[w]_0$ contains a periodic point of period exactly $\vert w \vert + t$, then we say that $\mathcal{G}$ has \emph{periodic specification.}  
	%Fix $t \in \mathbb{N}$. Either of the following conditions define a specification property on $\mathcal{G}$ with gap size $t$.
	
	%\begin{enumerate}
	
	%[(Per)] Condition (S) holds,  and in addition, the cylinder $[x]$ contains a periodic point of period exactly $\vert x \vert + t$.
	%\end{enumerate}
	
	\subsection{Measures of maximal entropy}
	
	For any subshift $X$, we denote by $M(X)$ the set of Borel probability measures on $X$. {Equipped with the weak* topology $M(X)$ is a compact topological space.} 
	
	For any $\mu \in M(X)$ and any finite measurable partition $\xi$ of $X$, %$\xi = \lbrace [w^{(1)}],  \ldots , [w^{(n)}] \rbrace$ of $X$, the 
	the \emph{entropy of $\xi$} (with respect to $\mu$), denoted by $H_\mu(\xi)$, is defined by
	
	\begin{equation*}
		H_\mu(\xi) = - \sum_{A \in \xi} \mu(A) \log \mu(A), %\mu \left( \left[ w^{(i)} \right] \right) \log \left( \mu \left( \left[ w^{(i)} \right] \right) \right).
	\end{equation*}
	where terms with $\mu(A) = 0$ are omitted.
	
	Given a subshift $X$ we denote the $\sigma$-invariant Borel probability measures with $M(X,\sigma)$.
	For $\mu \in M(X, \sigma)$, the \emph{entropy of $\mu$} (for the shift map $\sigma$) is defined by
	
	\begin{equation}
		h_\mu(X) = 
		\lim_{n \rightarrow \infty } \dfrac{-1}{n} \sum_{w \in \mathcal{L}_n(X)} \mu \left(  [w]_0 \right) \log \mu \left( [w]_0 \right) 
		= \lim_{n \rightarrow \infty } \dfrac{-1}{n} H_\mu(\xi^{(n)}),
	\end{equation}
	where $\xi^{(n)}$ represents the partition of $X$ into cylinder sets from the first $n$ letters, i.e. 
	$\xi^{(n)} = \{[w]_0 \ : \ w \in \mathcal{A}^n\}$.
	
	We note for future reference that $\xi^{(n)} = \bigvee_{i = 0}^{n-1} \sigma^{-i} \xi^{(1)}$, where $\xi^{(1)}$ is the partition based on $x_0$ and $\vee$ is the join of partitions. We will later need to make use of the following basic facts about entropy; for  proofs and general introduction to entropy theory, see \cite{Walters}.

	\begin{Theorem}[Theorem 4.3 \cite{Walters}]
		\label{theoreminequalityentropy}
		{For any subshift $X$, $\mu \in M(X)$, and $\mathcal{\xi}$, $\mathcal{\eta}$ finite partitions of $X$}, $H_\mu \left( \mathcal{\xi} \vee \mathcal{\eta} \right) \leq H_\mu\left( \xi \right) + H_\mu \left( \eta \right)$.
	\end{Theorem}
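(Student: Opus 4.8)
The plan is to deduce this subadditivity from two more basic facts about \emph{conditional entropy}, which I would introduce first. For finite partitions $\xi$ and $\eta$, define
\[
H_\mu(\xi \mid \eta) = -\sum_{B \in \eta} \mu(B) \sum_{A \in \xi} \frac{\mu(A \cap B)}{\mu(B)} \log \frac{\mu(A\cap B)}{\mu(B)},
\]
where terms involving a cell of measure zero are omitted. With this notion in hand the desired inequality splits into a chain rule and a ``conditioning reduces entropy'' estimate.

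First I would establish the chain rule $H_\mu(\xi \vee \eta) = H_\mu(\eta) + H_\mu(\xi \mid \eta)$. This is a purely algebraic computation: expanding $H_\mu(\xi \vee \eta) = -\sum_{A,B}\mu(A\cap B)\log \mu(A\cap B)$ and writing $\log\mu(A\cap B) = \log\mu(B) + \log\frac{\mu(A\cap B)}{\mu(B)}$, the first piece collapses after summing over $A$ to $-\sum_B \mu(B)\log\mu(B) = H_\mu(\eta)$, while the second piece is exactly $H_\mu(\xi\mid\eta)$.

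The heart of the argument---and the only place where anything beyond bookkeeping is needed---is the inequality $H_\mu(\xi \mid \eta) \leq H_\mu(\xi)$. Here I would invoke the concavity of $\phi(t) = -t\log t$ on $[0,1]$. For each fixed $A\in\xi$ one writes $\mu(A) = \sum_{B\in\eta}\mu(B)\cdot \frac{\mu(A\cap B)}{\mu(B)}$ as a convex combination of the numbers $\frac{\mu(A\cap B)}{\mu(B)}$ with weights $\mu(B)$ (which sum to $1$), so Jensen's inequality for the concave function $\phi$ gives $\phi(\mu(A)) \geq \sum_B \mu(B)\,\phi\!\left(\frac{\mu(A\cap B)}{\mu(B)}\right)$. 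Summing over $A\in\xi$ and interchanging the two finite sums yields precisely $H_\mu(\xi) \geq H_\mu(\xi\mid\eta)$.

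Combining the two steps gives $H_\mu(\xi\vee\eta) = H_\mu(\eta) + H_\mu(\xi\mid\eta) \leq H_\mu(\eta) + H_\mu(\xi)$, as claimed. I expect the only genuine subtlety to be the careful handling of partition cells of measure zero, so that every conditional quantity is well defined; this is managed by the standing convention that summands built from a zero measure are discarded, which also makes the concavity step legitimate since $\phi$ extends continuously to $\phi(0)=0$.
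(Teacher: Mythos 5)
Your proof is correct: the chain rule $H_\mu(\xi \vee \eta) = H_\mu(\eta) + H_\mu(\xi \mid \eta)$ combined with the Jensen/concavity bound $H_\mu(\xi \mid \eta) \leq H_\mu(\xi)$ is exactly the standard argument, and your handling of null cells via the convention $\phi(0)=0$ is the right bookkeeping. The paper does not prove this statement itself but cites it as Theorem 4.3 of Walters, whose proof proceeds by precisely this route, so your proposal matches the intended argument.
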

	
	\begin{Theorem}[Corollary 4.2.1 \cite{Walters}]
		\label{corolariocotaH}
		{For any subshift $X$ and $\mu \in M(X)$, if $\xi$ %= \lbrace [w^{(1)}],  \ldots , [w^{(k)}] \rbrace$ 
			is a finite measurable partition of $X$ with $k$ sets}, then $H_\mu(\xi) \leq \log(k)$, {with equality} only when 
		{$\mu(A) = k^{-1}$ for all $A \in \xi$.}%$\mu \left( \left[ w^{(i)} \right] \right) = 1/k$ for all $i$.
	\end{Theorem}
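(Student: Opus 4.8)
The plan is to reduce the statement to a purely finite-dimensional inequality about probability vectors and then invoke the strict concavity of the logarithm. Writing $\xi = \{A_1,\dots,A_k\}$ and $p_i = \mu(A_i)$, the numbers $p_i$ are nonnegative and sum to $1$, and by definition $H_\mu(\xi) = \sum_{i : p_i > 0} p_i \log(1/p_i)$, with terms $p_i = 0$ omitted. Thus everything reduces to showing that for any probability vector $(p_1,\dots,p_k)$ one has $\sum_{i : p_i > 0} p_i \log(1/p_i) \leq \log k$, with equality precisely when every $p_i$ equals $1/k$.

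For the inequality itself I would apply Jensen's inequality to the function $\log$, which is strictly concave on $(0,\infty)$. Regarding the $p_i$ with $p_i > 0$ as weights and the values $1/p_i$ as the sampled points, concavity gives
\[
\sum_{i : p_i > 0} p_i \log\frac{1}{p_i} \leq \log\left(\sum_{i : p_i > 0} p_i \cdot \frac{1}{p_i}\right) = \log\bigl(\#\{i : p_i > 0\}\bigr) \leq \log k,
\]
where the last step uses that the number of indices with $p_i > 0$ is at most $k$. This already establishes $H_\mu(\xi) \leq \log k$.

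For the equality case I would track when each of the two inequalities above is tight. Strict concavity of $\log$ forces equality in Jensen exactly when all the points $1/p_i$ (over indices with $p_i > 0$) coincide, i.e.\ when all nonzero $p_i$ equal a common value $c$. The final inequality $\log(\#\{i : p_i > 0\}) \le \log k$ is an equality exactly when $p_i > 0$ for every $i$. Combining these two conditions, equality holds throughout if and only if $p_i = c$ for all $i$, and since $\sum p_i = 1$ this forces $c = 1/k$, giving $\mu(A_i) = k^{-1}$ for all $i$.

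I expect no serious obstacle here; the only point requiring care is the bookkeeping of the equality case, namely correctly combining the two separate conditions (all nonzero weights equal, and no weight vanishing) to yield uniformity. Alternatively, one could bypass Jensen and apply Gibbs' inequality by comparing $(p_i)$ with the uniform vector $(1/k)$ via $\log x \le x - 1$; this yields $\log k - H_\mu(\xi) = \sum_i p_i \log(k p_i) \ge 0$ with equality iff $p_i = 1/k$ for all $i$, which is perhaps the most economical route.
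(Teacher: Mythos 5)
The paper does not actually prove this statement: it is quoted verbatim as Corollary 4.2.1 of Walters' book and used as a black box, so there is no internal proof to compare against. Your argument is correct and complete: the reduction to a probability vector, the Jensen step with weights $p_i$ and sample points $1/p_i$, and the careful splitting of the equality case into the two conditions (all nonzero $p_i$ equal, and no $p_i$ vanishing) are all sound. It is also essentially the standard textbook route---Walters derives the same bound from strict convexity of $\phi(x) = x\log x$ applied with uniform weights $1/k$, which is the same Jensen-type inequality viewed from the dual side, and your Gibbs-inequality variant via $\log x \leq x - 1$ is an equally valid shortcut. The only cosmetic caveat is that the stated theorem claims only the ``only if'' direction of the equality case, so the converse direction you verify is harmless extra work.
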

	
	\begin{Theorem}[\cite{Walters}, {p. 184}]\label{affine}
		{For any subshift $X$, finite measurable partition $\xi$ of $X$, measures $\mu_i \in M(X)$, and $p_i \geq 0$ ($1 \leq i \leq n$) with $\sum_{i = 1}^n p_i = 1$, 
			$H_{\sum_{i=1}^{n} p_i \mu_i }(\xi) \geq \sum_{i=1}^n p_i H_{\mu_i}(\xi)$.}
	\end{Theorem}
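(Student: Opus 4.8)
The plan is to reduce the entire statement to the concavity of the single real function $\phi(t) = -t \log t$ on $[0,1]$, with the convention $\phi(0) = 0$. First I would rewrite the partition entropy of an arbitrary measure $\nu$ as
\begin{equation*}
	H_\nu(\xi) = \sum_{A \in \xi} \phi(\nu(A)),
\end{equation*}
so that the claimed inequality becomes a statement about finitely many real numbers, one for each of the finitely many cells $A \in \xi$. Since $\mu := \sum_{i=1}^n p_i \mu_i$ satisfies $\mu(A) = \sum_{i=1}^n p_i \mu_i(A)$ for every $A$ by linearity of the convex combination, everything will follow if $\phi$ is concave.

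Next I would verify concavity of $\phi$. On $(0,1]$ one has $\phi''(t) = -1/t < 0$, so $\phi$ is strictly concave there, and continuity of $\phi$ at $0$ extends the concavity inequality to the closed interval $[0,1]$. Jensen's inequality (in its finite convex-combination form) then gives, for weights $p_i \geq 0$ summing to $1$ and values $t_i \in [0,1]$,
\begin{equation*}
	\phi\!\left(\sum_{i=1}^n p_i t_i\right) \geq \sum_{i=1}^n p_i \, \phi(t_i).
\end{equation*}
Applying this cellwise with $t_i = \mu_i(A)$ yields $\phi(\mu(A)) \geq \sum_{i=1}^n p_i \, \phi(\mu_i(A))$ for each $A \in \xi$.

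Finally I would sum over all $A \in \xi$ and interchange the two finite sums to obtain
\begin{equation*}
	H_\mu(\xi) = \sum_{A \in \xi} \phi(\mu(A)) \geq \sum_{A \in \xi} \sum_{i=1}^n p_i \, \phi(\mu_i(A)) = \sum_{i=1}^n p_i \sum_{A \in \xi} \phi(\mu_i(A)) = \sum_{i=1}^n p_i \, H_{\mu_i}(\xi),
\end{equation*}
which is exactly the desired inequality. The only point requiring care — and the step I would be most attentive to — is the treatment of the degenerate terms where some $\mu_i(A) = 0$ or $\mu(A) = 0$, on which $\phi$ is fixed by the convention $\phi(0) = 0$; I would confirm that the concavity inequality persists in these boundary cases, which is immediate from the continuity of $\phi$ on $[0,1]$.
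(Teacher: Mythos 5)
Your proof is correct. Note that the paper itself offers no proof of this statement --- it is quoted as a known result from Walters (p.~184) --- and your argument, reducing the inequality to the concavity of $\phi(t) = -t\log t$ on $[0,1]$ (with $\phi(0)=0$) and applying finite Jensen cellwise before summing over the partition, is precisely the standard proof given in that reference, including the correct attention to the degenerate cells where some measure vanishes.
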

	
	By the well-known Variational Principle, the supremum of $h_\mu(X)$ over all $\mu \in M(X, \sigma)$ is the \emph{topological entropy} $h_{top}(X)$ of $X$. %value at the supremum is called the \emph{topological entropy of $X$}, $h_{top}(X)$.
	For any subshift $X$, we have that
	
	\begin{equation}
		h_{top}(X)= \lim_{n \rightarrow \infty} \frac{1}{n} \log \vert \mathcal{L}_n(X) \vert .
	\end{equation}
	
	For general topological dynamical systems, the supremum above may not be achieved. However,
	every subshift has at least one measure of maximal entropy, that is $\nu\in M(X,\sigma)$ achieving the supremum above,
	meaning that $h_{\nu}(X) = h_{top}(X)$ (e.g. see \cite[Remark (2), pg 192]{Walters}). 
	
	We say a subshift is \emph{intrinsically ergodic} if there is only one (probability) measure of maximal entropy. 
	
	%% Linea 169 del main2.tex dropbox
	Every specified subshift is intrinsically ergodic \cite{bowen1974some}. This result was generalized in several works, including \cite{especificacion} and \cite{pavlovspec}. Before stating the result we need some extra definitions.  
	
	Given a collection of words $\mathcal{D} \subseteq \mathcal{L}(X)$ and $n \geq 1$, we define $\mathcal{D}_n = \mathcal{D} \cap \mathcal{L}_n(X)$. 
	%be the set of words of length $n \in \mathcal{D}$.
	We denote the \emph{growth rate} of $\mathcal{D}$ by 
	
	\begin{equation}
		h(\mathcal{D}) = \limsup_{n \rightarrow \infty} \dfrac{1}{n} \log \vert \mathcal{D}_n \vert.
	\end{equation}
	
	Note that $h(\mathcal{L}(X))=h_{top}(X).$
	%In \cite{especificacion} they give sufficient conditions for a subshift to be intrinsically ergodic, through some conditions that satisfy the decomposition of the language. 
	
	Following \cite{especificacion}, we say that $\mathcal{L}(X)$ \emph{admits a decomposition} $\mathcal{C}^p\mathcal{G} \mathcal{C}^s$ for $\mathcal{C}^p, \mathcal{G}, \mathcal{C}^s \subset \mathcal{L}(X)$ if every $w \in \mathcal{L}(X)$ can be written as $uvw$ for some $u \in \mathcal{C}^p$, $v \in \mathcal{G}$, $w \in \mathcal{C}^s$. For such a decomposition, we define the collection of words $\mathcal{G}(M)$ for each $M \in \mathbb{N}$ by
	
	\begin{equation}
		\mathcal{G}(M) = \lbrace  uvw \, : \, u \in \mathcal{C}^p , v \in \mathcal{G}, w \in \mathcal{C}^s, \vert u \vert \leq M, \vert w \vert \leq M \rbrace .
	\end{equation}
	
	Recall that Per$(n)$ denotes the set of points with period at most $n$ under $\sigma$. 
	%The next result is due to Climenhaga-Thompson \cite{especificacion}[Theorem C] and apply to  $\mathbb{N}-$subshifts and $\mathbb{Z}-$subshifts. 
	
	\begin{Theorem}{(Climenhaga and Thompson}
		\cite{especificacion}) 
		\label{theo-clim-thomp}
		. Let $X$ be a subshift whose language $\mathcal{L}(X)$ admits a decomposition $\mathcal{L}(X) = \mathcal{C}^p \mathcal{G} \mathcal{C}^s$, and suppose that the following conditions are satisfied:
		
		\begin{enumerate}
			\item $\mathcal{G}$ has specification.
			\item $h(\mathcal{C}^p \cup \mathcal{C}^s) < h_{\mbox{top}}(X)$.
			\item For every $M \in \mathbb{N}$, there exists $\tau$ such that given $v \in  \mathcal{G}(M)$, there exists words $u,w$ with $\vert u \vert \leq \tau, \, \vert w \vert \leq \tau$ for which $uvw \in \mathcal{G}$. 
		\end{enumerate}
		Then $X$ is intrinsically ergodic.
		Furthermore, if $\mathcal{G}$ has periodic specification, then 
		
		\begin{equation}
			\mu_n = \dfrac{1}{\vert \mbox{Per}(n)\vert} \sum_{x \in \mbox{Per}(n)} \delta_x,
		\end{equation}
		converges to the measure of maximal entropy in the weak* topology. 
	\end{Theorem}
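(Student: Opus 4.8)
The plan is to prove uniqueness of the measure of maximal entropy by manufacturing a candidate measure $\mu$ out of the good core $\mathcal{G}$, establishing a two-sided Gibbs-type estimate at the rate $h_{top}(X)$ for cylinders over words of the core, and then running a Bowen-style comparison argument to rule out any second ergodic measure of maximal entropy. Since the relevant potential is the zero potential, every partition function reduces to a counting function $\mathcal{D} \mapsto \vert \mathcal{D}_n \vert$, and ``equilibrium state'' means ``measure of maximal entropy''. As a first reduction, because the entropy map is affine (cf. Theorem~\ref{affine}), the measures of maximal entropy form a convex set whose extreme points are ergodic, so it suffices to show that there is exactly one \emph{ergodic} measure of maximal entropy.

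I would begin with the combinatorial estimates. The decomposition $\mathcal{L}(X) = \mathcal{C}^p \mathcal{G} \mathcal{C}^s$ gives $\vert \mathcal{L}_n(X) \vert \leq \sum_{i+j+k=n} \vert \mathcal{C}^p_i \vert \, \vert \mathcal{G}_j \vert \, \vert \mathcal{C}^s_k \vert$, and condition 2, namely $h(\mathcal{C}^p \cup \mathcal{C}^s) < h_{top}(X)$, forces the prefix and suffix factors to contribute a strictly smaller exponential rate, so the dominant term has $j$ close to $n$. Combined with the specification property of $\mathcal{G}$ — which renders $\vert \mathcal{G}_n \vert$ almost supermultiplicative, since words $w^{(1)} \in \mathcal{G}_n$ and $w^{(2)} \in \mathcal{G}_m$ can be glued by a single gap word of length $t$ into an admissible word — a Fekete-type argument yields $h(\mathcal{G}) = h_{top}(X)$ together with matching exponential upper and lower bounds on $\vert \mathcal{G}_n \vert$ at this rate.

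Next I would construct $\mu$ as a weak* limit of the empirical measures that distribute mass uniformly over the words in $\mathcal{G}_n$; when $\mathcal{G}$ has periodic specification one may instead use the periodic-orbit measures $\mu_n$ from the ``Furthermore'' clause, which is the object we ultimately want to control. The counting estimates, fed through subadditivity of entropy (Theorem~\ref{theoreminequalityentropy}) and the cardinality bound (Theorem~\ref{corolariocotaH}), give the upper Gibbs bound $\mu([w]_0) \leq C e^{-\vert w \vert h_{top}(X)}$ for every $w \in \mathcal{L}(X)$. For the matching lower bound one embeds a given core word into exponentially many admissible words of any larger length via specification; here condition 3 is indispensable, since the $\mathcal{C}^p \mathcal{G} \mathcal{C}^s$ decomposition naturally produces words of $\mathcal{G}(M)$ rather than of $\mathcal{G}$, and condition 3 allows one to reabsorb any word of $\mathcal{G}(M)$ back into $\mathcal{G}$ at a cost $\tau$ depending only on $M$. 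This produces $\mu([w]_0) \geq c_M e^{-\vert w \vert h_{top}(X)}$ for $w \in \mathcal{G}(M)$.

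The main obstacle is the uniqueness step, where the \emph{topological} entropy gap of condition 2 must be upgraded to a \emph{measure-theoretic} statement. Concretely, I would show that any ergodic measure of maximal entropy $\nu$ must give asymptotically full weight to the set of points whose central blocks decompose as $\mathcal{C}^p \mathcal{G} \mathcal{C}^s$ with short obstructions: were $\nu$ to charge long prefixes or suffixes from $\mathcal{C}^p \cup \mathcal{C}^s$ with non-negligible frequency, the Shannon--McMillan--Breiman theorem together with condition 2 would force $h_\nu(X) < h_{top}(X)$, contradicting maximality. Once $\nu$ is concentrated on the good core in this sense, the lower Gibbs bound applies $\nu$-generically, and Bowen's comparison argument (\cite{bowen1974some}, as adapted in \cite{especificacion}) shows that $\nu$ cannot be mutually singular with $\mu$; since distinct ergodic invariant measures are mutually singular, we conclude $\nu = \mu$. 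The weak* convergence of the periodic-orbit measures $\mu_n$ then follows by applying the same two-sided Gibbs estimates to the periodic points supplied by periodic specification.
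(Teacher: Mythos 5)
This theorem is not proved in the paper at all: it is imported as a black box from Climenhaga and Thompson \cite{especificacion}, so there is no internal proof to compare your proposal against. Measured against the published Climenhaga--Thompson argument, your outline does follow the same broad strategy (a descendant of Bowen's argument): counting estimates, construction of a candidate measure with Gibbs-type bounds, concentration of any ergodic measure of maximal entropy on the good core, and a mutual-singularity contradiction. The overall architecture is right, and your reduction to ergodic measures and your identification of where condition 3 enters are both correct.

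However, the proposal has genuine gaps exactly where the technical content of the theorem lives. First, gluing $w^{(1)}, w^{(2)} \in \mathcal{G}$ by a gap word produces a word of $\mathcal{L}(X)$, \emph{not} of $\mathcal{G}$ --- specification of $\mathcal{G}$ gives no closure of $\mathcal{G}$ under concatenation --- so $\vert \mathcal{G}_n \vert$ is not almost supermultiplicative and no Fekete-type argument applies to it. The identity $h(\mathcal{G}) = h_{top}(X)$ must instead be extracted from the decomposition together with condition 2, and the uniform, constant-level bounds (an upper bound $\vert \mathcal{L}_n(X) \vert \leq C e^{n h_{top}(X)}$ and lower counting bounds for the good words) are separate, delicate lemmas in Climenhaga--Thompson's paper, not consequences of subadditivity. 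Second, in the uniqueness step, the Shannon--McMillan--Breiman theorem controls measures of cylinders, not the lengths of prefixes and suffixes in a (non-unique) decomposition of a typical block; converting ``$\nu$ charges long obstructions with non-negligible frequency'' into $h_\nu(X) < h_{top}(X)$ requires the covering and counting argument (via a Katok-type entropy formula) that occupies most of the original uniqueness proof, and passing from ``prefix plus suffix length is small relative to $n$'' to ``central blocks lie in $\mathcal{G}(M)$ for one fixed $M$'' --- which is what your lower Gibbs bound actually needs --- is not automatic. The ``furthermore'' clause about the periodic-orbit measures $\mu_n$ is likewise asserted in one sentence, whereas it needs its own counting argument comparing $\vert \mbox{Per}(n) \vert$ with $e^{n h_{top}(X)}$ and showing the periodic measures concentrate on good cylinders. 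As it stands, your text is a faithful road map of the known proof rather than a proof.
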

	
	\begin{remark}
		Using results from \cite{pacifico2022existence}, Climenhaga explained in a blog post \cite{climenhagablog} that condition 3 is actually not required to prove uniqueness of the measure of maximal entropy. However, this condition is not difficult to check for bounded density shifts with positive entropy (Lemma~\ref{lemalenghtwords}) and so we verify it regardless. 
	\end{remark}
	
	\subsection{Bounded density shifts}
	Bounded density shifts were introduced in \cite{stanley} (see also \cite[Chapter 3.4]{bruin2022topological}).
	Let $f : \mathbb{N}_0 \rightarrow \left[ 0 , \infty \right) $ be a function. We say $f$ is \emph{canonical} if 
	\begin{itemize}
		\item $f(0)=0$,
		\item $f(m+1)\geq f(m)$ for all $m\geq 0$, and
		\item $f(m+n)\leq f(m)+f(n)$ for all $n,m\in \mathbb{N}$.
	\end{itemize}
	
	The \emph{bounded density shift} associated to a canonical function, $f$, is defined as follows:
	
	\begin{equation}
		X_f = \left\lbrace x \in \left( \mathbb{N}_0 \right)^\mathbb{Z} : \forall p \in \mathbb{N} \mbox{ and } \forall i \in \mathbb{Z} \,\, \sum_{r=i}^{i+p-1} x_r \leq f(p) \right\rbrace .
	\end{equation}
	
	Note that $X_f$ is a subshift on the alphabet $\mathcal{A}=\{0,1,...,\lfloor f(1) \rfloor\}$.
	
	{Actually, bounded density shifts can be defined for any function $f : \mathbb{N}_0 \rightarrow \left[ 0 , \infty \right)$, but {it was shown in \cite{stanley} that every bounded density shift can be defined by some canonical $f$}.}
	
	\begin{definition}
		Let $X_f$ be a bounded density shift, the limit 
		
		\begin{equation}
			\lim_{n \rightarrow \infty} \dfrac{f(n)}{n}
		\end{equation}
		is called the \emph{limiting gradient} and is denoted by $\alpha_{f}$.
	\end{definition}

	The existence of the limit is given by Fekete's lemma and the definition of canonical function; furthermore, the limit is an infimum, and so $f(n) \geq \alpha_{f} n$ for all $n$.
	
	There exist bounded density shifts with $\alpha_{f}=0$ but they are fairly trivial systems where the upper density of non-zero coordinates is always 0.
	A bounded density shift has positive topological entropy if and only if $\alpha_{f}>0$ (see \cite[Theorem 12]{kwietniak2011topological}) if and only if it is coded (determined by a labeled irreducible graph with possibly countably
	many vertices)  (\cite[Theorem 3.1]{stanley}).

	As we mentioned in the previous section, the specification property guarantees intrinsic ergodicity. For bounded density shifts, $X_f$ is specified with specification constant M if and only if $0^M$ is intrinsically synchronizing (\cite[Theorem 5.1]{stanley}). Bounded density shifts with positive topological entropy without specification can easily be constructed (\cite{stanley}).
	
	%In \cite{stanley}, there are characterizations for bounded density shifts for properties like transitivity, topological mixing, finite type, sofic, specification, synchronized and coded. %The inclusions for mixing subshifts are:
	
	%\begin{equation*}
	%\mbox{ finite type } \subset \mbox{ sofic } \subset \mbox{ specified } \subset \mbox{ synchronized } \subset \mbox{ coded. }    
	%\end{equation*}
	
	As we mentioned in the previous section, the specification property guarantees intrinsic ergodicity. For bounded density shifts, $X_f$ is specified with specification constant $M$ if and only if $0^M$ is intrinsically synchronizing \cite[Theorem 5.1]{stanley}. There exist bounded density shifts with positive topological entropy without specification (\cite{stanley}).
	
	A subshift $X$ with alphabet $A=\{0,1,...,n\}$ is \emph{hereditary} if every time there is $x\in X$ and $y\in A^{\mathbb{Z}}$ with $y_i\leq x_i$ $\forall i\in \mathbb{Z}$, then $y\in X$. 
	It is not difficult to check that bounded density shifts are hereditary. 
	
	\section{Intrinsic ergodicity}
	In this section we fix a binary bounded density shift $X_f$. We define
	%\begin{equation*}
	%\label{definitionofG}
	$$
	\mathcal{G} =  \left\lbrace w \in \mathcal{L}(X_f) \,\, : \mbox{ if } u \in \mbox{Pre}(w) \cup \mbox{Suf}(w), \mbox{ then } \,\, \frac{1}{\vert u \vert} \sum_{i=1}^{\vert u \vert } u_i < \alpha_{f} \right\rbrace \text{, and}
	$$
	$$
	\mathcal{B} =  \mathcal{C}^p = \mathcal{C}^s  =  \left\lbrace v \in \mathcal{L}(X_f) \,\, : \,\, \frac{1}{\vert v \vert} \sum_{i=1}^{\vert v \vert} v_i \geq \alpha_{f} \right\rbrace \cup \{ \epsilon \}.
	%\end{equation*}
	$$
	where $\epsilon$ denotes the empty word. 
	\begin{Lemma}
		\label{LemaLenguaje}
		The language $\mathcal{L}(X_f)$ admits a decomposition $\mathcal{B} \mathcal{G} \mathcal{B}$.
	\end{Lemma}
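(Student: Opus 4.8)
The plan is to produce the factorization explicitly, by stripping off a maximal ``heavy'' prefix and then a maximal ``heavy'' suffix. Write $S(a)=\sum_{i=1}^{|a|} a_i$ for the sum of the letters of a word $a$, and call a nonempty word $a$ \emph{heavy} if $S(a)\ge \alpha_f |a|$ and \emph{light} otherwise. With this terminology $\mathcal{B}$ is exactly $\{\varepsilon\}$ together with the heavy words, and $\mathcal{G}$ is the set of words all of whose nonempty prefixes and suffixes are light. I first record that $\mathcal{L}(X_f)$ is closed under passing to subwords, so every piece of any factorization I write down is automatically admissible; the entire content of the lemma is therefore the arithmetic of averages and not a question of admissibility.

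The one elementary fact I will use repeatedly is that heaviness is preserved by concatenation: if $a$ and $b$ are both heavy then $S(ab)=S(a)+S(b)\ge \alpha_f(|a|+|b|)=\alpha_f|ab|$, so $ab$ is heavy. This requires nothing beyond the definition of the threshold $\alpha_f$.

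Given $w\in\mathcal{L}(X_f)$, let $p$ be the longest prefix of $w$ that is heavy, taking $p=\varepsilon$ if $w$ has no heavy prefix, and write $w=pr$. I claim every nonempty prefix of $r$ is light: otherwise some prefix $q$ of $r$ is heavy, and then $pq$ is a heavy prefix of $w$ strictly longer than $p$, contradicting maximality. Next let $s$ be the longest heavy suffix of $r$ (again $s=\varepsilon$ if none) and write $r=gs$, so that $w=pgs$. The same concatenation argument, now applied to suffixes, shows every nonempty suffix of $g$ is light. Finally, since $g$ is a prefix of $r$, every nonempty prefix of $g$ is also a nonempty prefix of $r$ and hence light. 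Therefore $g\in\mathcal{G}$, while $p,s\in\mathcal{B}$ by construction, giving the desired factorization $w=pgs$ and hence the decomposition $\mathcal{L}(X_f)=\mathcal{B}\mathcal{G}\mathcal{B}$.

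I do not expect a genuine obstacle here: once the concatenation fact is isolated, the argument is purely formal. The only points demanding care are the degenerate cases---when any of $p$, $s$, $g$ is empty, which are harmless because $\varepsilon\in\mathcal{B}$ by fiat and $\varepsilon\in\mathcal{G}$ vacuously---and the order in which the pieces are stripped. Peeling the prefix before the suffix is essential: it guarantees that $g$ remains a prefix of $r$, so that the prefix condition and the suffix condition defining $\mathcal{G}$ hold simultaneously.
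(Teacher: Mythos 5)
Your proof is correct and follows essentially the same route as the paper's: strip the maximal heavy (i.e.\ $\mathcal{B}$) prefix, then the maximal heavy suffix of what remains, and use the fact that concatenating heavy words yields a heavy word to contradict maximality if the middle block had a heavy prefix or suffix. The only cosmetic difference is that you argue directly that all prefixes and suffixes of $g$ are light, while the paper phrases the same maximality argument as a proof by contradiction.
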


	\begin{proof}
		Let $z \in \mathcal{L} \left( X_f \right)$. Define $u$ to be the prefix of $z$ in $\mathcal{B}$ of maximal length (which may be the empty word $\epsilon$), and denote its length by $M \geq 0$. Let $z'$ be the {maximal} proper subword of $z$ that does not overlap with $u$, i.e. $z'=z_{[M+1, \vert z \vert]}$. Similarly, define $w$ to be the suffix of $z'$ in $\mathcal{B}$ of maximal length (which may be the empty word $\epsilon$), and denote its length by $N \geq 0$.
		
		{We write $y = z_{[M+1,\vert z \vert -N]}$, and assume for a contradiction that $y \notin \mathcal{G}$}. Then by definition, there exists a word {$v \in \mbox{Pre}(y) \cup \mbox{Suf}(y)$} with
		\begin{equation*}
			\dfrac{1}{\vert v \vert} \sum_{i=1}^{\vert v \vert} v_i \geq \alpha_{f}.
		\end{equation*}
		{If $v \in \mbox{Pre}(y)$, then $uv$ would be a prefix of $z$ in $\mathcal{B}$ longer than $u$, contradicting minimality of $u$. Similarly, if $v \in \mbox{Suf}(y)$, then
			$vw$ would be a suffix of $z'$ in $\mathcal{B}$ longer than $w$, contradicting minimality of $w$.
			Therefore, we have a contradiction and $y \in \mathcal{G}$, and so $z = uyw \in \mathcal{B} \mathcal{G} \mathcal{B}$.}  
		%Without loss of generality we can assume that $v \in \mbox{Pre}\left( z_{[M+1,\vert z \vert -N-1]} \right)$, it means that $v \in \mathcal{B}$, but it is not possible because then $uv$ would be a prefix of $z$ in $\mathcal{B}$ longer than $u$. % is the prefix of $z$ with the largest size in $\mathcal{B}$. 
	\end{proof}

	%\newpage
	\begin{Lemma}
		\label{lemaespecifiacion}
		The set $\mathcal{G}$ has specification.
	\end{Lemma}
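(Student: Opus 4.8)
The plan is to prove specification with gap size $t = 0$; that is, I claim the concatenation $w^{(1)} w^{(2)} \cdots w^{(m)}$ of any finitely many words from $\mathcal{G}$ already belongs to $\mathcal{L}(X_f)$, so the connecting words $v^{(i)}$ may be taken empty. (Equivalently, one may insert any block $0^t$ between consecutive $w^{(k)}$, since adjoining zeros only relaxes the density constraints; but no gap is actually needed.) This is a pleasant contrast with the classical $\beta$-shift situation, where a genuinely positive gap is required, and it is the structure of $\mathcal{G}$ — every prefix and suffix being \emph{strictly} below density $\alpha_f$ — that makes the gap-free statement possible.

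First I would record the standard characterization of the language: a word $z$ lies in $\mathcal{L}(X_f)$ if and only if every subword $u$ of $z$ satisfies $\sum_{i=1}^{|u|} u_i \le f(|u|)$. The forward direction is immediate from the definition of $X_f$, and the converse follows by padding $z$ with zeros on both sides to form a bi-infinite point: any interval meeting the padded region has the same letter-sum as the subword of $z$ it contains, and length at least that subword's length, so monotonicity of $f$ gives the required bound.

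Next, write $W = w^{(1)} \cdots w^{(m)}$ with each $w^{(k)} \in \mathcal{G}$, and fix an arbitrary subword $u$ of $W$. If $u$ lies inside a single $w^{(k)}$, then $\sum u \le f(|u|)$ since $w^{(k)} \in \mathcal{L}(X_f)$. Otherwise $u$ begins in some $w^{(i)}$ and ends in some $w^{(j)}$ with $i < j$, so it decomposes as a nonempty suffix $s$ of $w^{(i)}$, the entire middle words $w^{(i+1)}, \dots, w^{(j-1)}$, and a nonempty prefix $p$ of $w^{(j)}$. Each of these pieces is a prefix, a suffix, or an entire word of some member of $\mathcal{G}$ (an entire word being simultaneously a prefix and a suffix of itself), so the defining condition of $\mathcal{G}$ yields $\sum s < \alpha_f |s|$, $\sum p < \alpha_f |p|$, and $\sum w^{(k)} < \alpha_f |w^{(k)}|$ for each $i < k < j$. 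Adding these gives $\sum u < \alpha_f |u|$.

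Finally I would invoke $f(n) \ge \alpha_f n$ (valid because the limiting gradient is the infimum of $f(n)/n$) to conclude $\sum u < \alpha_f |u| \le f(|u|)$. Thus every subword of $W$ obeys the density bound, so $W \in \mathcal{L}(X_f)$ by the characterization, establishing specification with gap $0$. There is no deep obstacle here; the only point requiring care is the case analysis for where a subword of the long concatenation can start and end — specifically, verifying that its overlaps with the two outer words $w^{(i)}$ and $w^{(j)}$ are genuinely a suffix and a prefix, so that the definition of $\mathcal{G}$ applies to each piece and the strict inequalities combine to the strict bound $\sum u < \alpha_f |u|$.
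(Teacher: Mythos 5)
Your proof is correct, and the core computation is the same as the paper's: both use gap size $t=0$, both decompose a word spanning several $\mathcal{G}$-words into a suffix of one, some whole words, and a prefix of another, and both sum the strict density bounds from the definition of $\mathcal{G}$ and finish with $f(n) \geq \alpha_f n$. The one substantive difference is in what gets verified at the end. The paper checks the density bound on words of the form $v^{(a)}w^{(1)}\cdots w^{(m)}v^{(b)}$ with $v^{(a)} \in \mbox{Suf}(w^{(m)})$ and $v^{(b)} \in \mbox{Pre}(w^{(1)})$ --- exactly the shape of an arbitrary window of the bi-infinite periodic point $(w^{(1)}\cdots w^{(m)})^\infty$ --- and thereby concludes \emph{periodic} specification. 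Your route instead certifies membership of the finite concatenation in $\mathcal{L}(X_f)$ via the zero-padding characterization of the language, which yields only plain specification. That fully proves the lemma as stated, but the periodic version is what is quietly needed downstream: the convergence of the periodic-orbit averages $\mu_n$ to the measure of maximal entropy claimed in Theorem~\ref{thm:main} and Corollary~\ref{cor:simple} invokes the periodic-specification clause of Theorem~\ref{theo-clim-thomp}. The good news is that your case analysis never actually uses the zero padding in the density estimate, so it applies verbatim to windows of the periodic point; upgrading your argument to periodic specification costs one sentence.
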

	
	\begin{proof}
		We will show that $\mathcal{G}$ has periodic specification with gap size $t=0$. Let $ m \in  \mathbb{N}$, $w^{(1)}, \ldots , w^{(m)} \in \mathcal{G}$, $v^{(a)}\in \mbox{Suf}(w^{(m)})$, $v^{(b)}\in \mbox{Pre}(w^{(1)})$ and $z = v^{(a)}w^{(1)} \cdots w^{(m)}v^{(b)}$. 
		We compute
		\begin{eqnarray*}
			\sum_{i=1}^{|z|} z_i & = & \sum_{i=1}^{|v^{(a)}|} v_{i}^{(a)} + \sum_{i=1}^{|w^{(1)}|} w_{i}^{(1)} + \ldots + \sum_{i=1}^{|w^{(m)}|} w_{i}^{(m)} + \sum_{i=1}^{|v^{(b)}|} v_{i}^{(b)} \\
			& < & |v^{(b)}| \alpha_{f} + |w^{(1)}| \alpha_{f} + \ldots + |w^{(m)}| \alpha_{f} +|v^{(b)}| \alpha_{f}\\
			& = & \alpha_{f} \left(|v^{(a)}| +\sum_{i=1}^{m} |w^{(i)}| +|v^{(b)}| \right) \\
			& = & \alpha_{f} |z| \\
			& \leq & f(|z|). 
		\end{eqnarray*}
		This implies that any periodic point made from concatenations of words from $\mathcal{G}$ is in $X_f$. We conclude that $\mathcal{G}$ has periodic specification.
	\end{proof}
	
	In the second part of the following proposition we use techniques from Misiurewicz's proof of the variational principle \cite{misiurewicz1976short} to build measures with entropy higher or equal than that of a sub-language. These applications of the tools from \cite{misiurewicz1976short} have already been noted in \cite[Proposition 5.1]{burns2018unique} and \cite[Lemma 6.8]{pacifico2022existence}.  
	
	\begin{Proposition}
		\label{lemaentropia}
		There exists $\mu\in M(X_f,\sigma)$ with $\sum_{i=0} ^{\lfloor f(1) \rfloor} i\mu([i]_0) \geq \alpha_{f}$ and $h(\mathcal{B}) \leq h_\mu(X_f)$.
	\end{Proposition}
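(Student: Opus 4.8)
The plan is to realize the claimed measure as a weak* limit of empirical measures carried by the words of $\mathcal{B}$, following the construction in Misiurewicz's proof of the variational principle. Recall that $\xi^{(q)}$ denotes the partition of $X_f$ into cylinders determined by the first $q$ coordinates, and that $\mathcal{B}_n = \mathcal{B} \cap \mathcal{L}_n(X_f)$. For each $n$ with $\mathcal{B}_n \neq \emptyset$, choose for every $w \in \mathcal{B}_n$ a point $x^{(w)} \in [w]_0 \subseteq X_f$ and set
\[
\nu_n = \frac{1}{|\mathcal{B}_n|} \sum_{w \in \mathcal{B}_n} \delta_{x^{(w)}}, \qquad \mu_n = \frac{1}{n}\sum_{j=0}^{n-1}(\sigma^j)_* \nu_n .
\]
Since the total variation of $(\sigma)_* \mu_n - \mu_n$ is at most $2/n$, every weak* limit of the $\mu_n$ is $\sigma$-invariant. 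I would then fix a subsequence $(n_k)$ along which $\mathcal{B}_{n_k} \neq \emptyset$, $\tfrac{1}{n_k}\log|\mathcal{B}_{n_k}| \to h(\mathcal{B})$, and $\mu_{n_k} \to \mu$ weak*; this $\mu \in M(X_f,\sigma)$ is the desired measure. A preliminary point is that $\mathcal{B}_n \neq \emptyset$ for infinitely many $n$, which follows from the existence of words of length $n$ whose letter-average is at least $\alpha_f$: the greedy maximal-density word has prefix sums that track $f$ (by an inf-convolution computation using subadditivity of $f$), and the associated periodic word attains density exactly $\alpha_f$.

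The density estimate is the easy half. Because $x \mapsto x_0$ is locally constant, hence continuous, weak* convergence gives $\sum_{i} i\,\mu([i]_0) = \int x_0\,d\mu = \lim_k \int x_0\,d\mu_{n_k}$. For each $n$,
\[
\int x_0 \, d\mu_n = \frac{1}{n}\sum_{j=0}^{n-1}\int x_j \, d\nu_n = \frac{1}{|\mathcal{B}_n|}\sum_{w \in \mathcal{B}_n}\frac{1}{n}\sum_{i=1}^{n} w_i \geq \alpha_f ,
\]
the last inequality holding termwise by the very definition of $\mathcal{B}$. Passing to the limit yields $\sum_{i} i\,\mu([i]_0) \geq \alpha_f$.

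The entropy estimate is the technical heart. Since the $|\mathcal{B}_n|$ points $x^{(w)}$ lie in distinct atoms of $\xi^{(n)}$, each of $\nu_n$-mass $|\mathcal{B}_n|^{-1}$, the equality case of Theorem~\ref{corolariocotaH} gives $H_{\nu_n}(\xi^{(n)}) = \log|\mathcal{B}_n|$. I would then run the standard Misiurewicz comparison: fixing $q \le n$ and decomposing $\{0,\dots,n-1\}$ into $q$ shifted runs of blocks of length $q$, subadditivity (Theorem~\ref{theoreminequalityentropy}) bounds $H_{\nu_n}(\xi^{(n)})$ by a sum of the quantities $H_{(\sigma^i)_*\nu_n}(\xi^{(q)})$ plus a boundary error of order $q^2$; summing over the $q$ offsets and applying concavity (Theorem~\ref{affine}) to move the average of measures outside the entropy, one arrives at
\[
\frac{1}{n}\log|\mathcal{B}_n| \leq \frac{1}{q}H_{\mu_n}(\xi^{(q)}) + \frac{2q\log|\mathcal{A}|}{n} .
\]
Because $\xi^{(q)}$ consists of clopen sets, $\nu \mapsto H_\nu(\xi^{(q)})$ is weak*-continuous, so taking $n = n_k \to \infty$ gives $h(\mathcal{B}) \leq \tfrac{1}{q}H_\mu(\xi^{(q)})$ for every $q$. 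Letting $q \to \infty$, using subadditivity of $q \mapsto H_\mu(\xi^{(q)})$ so that the limit equals the infimum, yields $h(\mathcal{B}) \leq h_\mu(X_f)$.

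The main obstacle I anticipate is the bookkeeping in the Misiurewicz block decomposition: one must organize the shifted runs so that each coordinate is counted at most once and verify that the aggregate boundary contribution is genuinely $O(q^2)$, so that after dividing by $nq$ the error is $o(1)$ as $n \to \infty$ for fixed $q$. The nonemptiness of $\mathcal{B}_n$ is a secondary point, but it must be settled so that the empirical measures, and hence the construction, are non-vacuous.
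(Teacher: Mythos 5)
Your proposal is correct and follows essentially the same route as the paper's proof: uniform measures concentrated on points through the words of $\mathcal{B}_n$, Ces\`aro averaging under $\sigma$, passage to a subsequence realizing $h(\mathcal{B})$ and a weak* limit, the termwise density bound $\int x_0 \, d\mu_n \geq \alpha_f$ preserved by weak* continuity, and the Misiurewicz block decomposition with concavity to get $h(\mathcal{B}) \leq \frac{1}{q} H_\mu\bigl(\xi^{(q)}\bigr)$ for all $q$. The only differences are immaterial: the paper uses the specific zero-padded points ${}^\infty 0 . w 0^\infty$ rather than arbitrary points of $[w]_0$, and it silently takes the non-emptiness of $\mathcal{L}_n(X_f) \cap \mathcal{B}$ for granted (which holds for every $n$ when $f$ is Stanley's canonical function, since then $f(n)$ is attained as the sum of some word of length $n$ and $f(n)/n \geq \alpha_f$ by Fekete), a point you flag explicitly and the paper does not.
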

	
	\begin{proof}
		%We will construct an invariant measure whose entropy is great or equal to the growth rate of the number of words of length $n$ in $B$. 
		For each $n \in \mathbb{N}$ and $w \in \mathcal{L}_n(X_f) \cap \mathcal{B}$, consider the set:
		
		\begin{equation*}
			K_n = \lbrace \left.^\infty 0 . w 0^\infty \right. : w \in \mathcal{L}_n(X_f) \cap \mathcal{B} \rbrace.
		\end{equation*}
		By construction $\vert K_n \vert = \vert \mathcal{L}_n(X_f) \cap \mathcal{B} \vert$. Let $\nu_n \in M(X_f)$ be the atomic measure concentrated uniformly on the points of $K_n$, i.e. 
		
		\begin{equation*}
			\nu_n = \dfrac{1}{\vert K_n \vert} \sum_{x \in K_n} \delta_x .
		\end{equation*}
		Let $\mu_n \in M(X_f)$ 
		be defined by
		\begin{equation*}
			\mu_n = \dfrac{1}{n} \sum_{j=0}^{n-1} \nu_n \circ \sigma^{-j}.
		\end{equation*}
		
		Note that
		
		\begin{eqnarray*}
			\sum_{i=0} ^{\lfloor f(1) \rfloor} i\mu_n([i]_0) & = & \sum_{i=0} ^{\lfloor f(1) \rfloor} \dfrac{i}{n} \sum_{j=0}^{n-1} \nu_n \circ \sigma^{-j} ([i]_0) \\
			& = & \sum_{i=0} ^{\lfloor f(1) \rfloor} \dfrac{i}{n} \sum_{j=1}^{n} \dfrac{\vert \{w\in \mathcal{L}_n(X_f) \cap \mathcal{B}: w_i=i \}\vert}{\vert K_n \vert} \\
			& = & \dfrac{1}{\vert K_n \vert} \sum_{w \in \mathcal{L}_n(X_f) \cap \mathcal{B}} \left( \dfrac{1}{n} \sum_{j=1}^{n} w_j \right) \\
			& \geq & \alpha_{f}.
		\end{eqnarray*}

		% \begin{eqnarray*}
			%  \mu_n([1]) & = & \dfrac{1}{n} \sum_{i=0}^{n-1} \nu_n \circ \sigma^{-i} ([1]). \\
			%            & = & \dfrac{1}{n} \left( \nu_n[1] + \nu_n(\sigma^{-1}([1])) + \ldots + \nu_n (\sigma^{n-1}([1])) \right) \\
			%           & = & \dfrac{1}{n} \left( \dfrac{1}{\vert K_n \vert} \sum_{x\in K_n} \delta_x ([1]) + \ldots + \dfrac{1}{\vert K_n \vert} \sum_{x\in K_n} \delta_x \left( \bigcup_{w \in \mathcal{B}_{n-1}(X_f) \cap B} [w1] \right) \right) \\
			%          & = & \dfrac{1}{n} \left( \dfrac{1}{\vert K_n \vert} \left( \sum_{x \in K_n} \left( \delta_x([1]) + \ldots + \delta_x \left( \bigcup_{w \in \mathcal{B}_{n-1}(X_f) \cap B} [w1] \right) \right) \right) \right) \\
			%         & \geq & \dfrac{1}{\vert K_n \vert} \sum_{w \in \mathcal{L}_n(X_f) \cap B} \left( \dfrac{1}{n} \sum_{i=1}^{n} w_i \right) \\
			%        & \geq & \alpha_{f}.
			%\end{eqnarray*}

			Since $M(X_f)$ is compact {(in the weak* topology)}, we can choose a subsequence such that 
			
			\begin{equation}
				\label{subsequenceofentropyofB}
				\lim_{j \rightarrow \infty} \dfrac{1}{n_j} \log \vert \mathcal{L}_{n_j} \left( X_f \right) \cap \mathcal{B} \vert   = \limsup_{n \rightarrow \infty} \dfrac{1}{n} \log \vert \mathcal{L}_n \left( X_f \right) \cap \mathcal{B} \vert = h(\mathcal{B}), 
			\end{equation}
			and {$\mu_{n_j}\rightarrow \mu \in M(X_f)$}. By the definition of $\mu_n$, it is routine to check that $\mu \in M \left( X_f, \sigma \right)$, i.e. $\mu$ is $\sigma$-invariant.

			We will use techniques from the proof of the variational principle in \cite{misiurewicz1976short} to prove that
			
			% s(\varepsilon, K, \sigma) = \limsup_{n \rightarrow \infty} (1/n) \log s_n(\varepsilon,k)
			
			\begin{equation}
				h_\mu  \left( X_f \right) \geq \limsup_{n \rightarrow \infty} \dfrac{1}{n} \log \vert \mathcal{L}_n (X_f) \cap \mathcal{B} \vert = h(\mathcal{B}).
			\end{equation}
			Firstly, since $\sum_{i=0} ^{\lfloor f(1) \rfloor} i\mu_{n_j}([i]_0) \geq \alpha_{f}$ {and $\mu_{n_j} \rightarrow \mu$}, we also have that $\sum_{i=0} ^{\lfloor f(1) \rfloor} i\mu([i]_0) \geq \alpha_{f}$. Consider the partition given by the alphabet $\xi = \lbrace [0]_0, \ldots , [\lfloor f(1) \rfloor]_0 \rbrace$. %Observe that $\xi$ is a generator. 
			{Since all $w \in \mathcal{L}_{n_j}(X_f) \cap \mathcal{B}$ have equal measure $\nu_{n_j}([w]_0) = |K_{n_j}|^{-1}$ and all other $w \in \mathcal{A}^n_j$ have $\nu_{n_j}([w]_0) = 0$, by Theorem \ref{corolariocotaH}},
			%% Empezar a cambiar desde aquí, usar la partición natural en lugar de la partición que tenemos.
			%    \begin{comment}
				%    Let $\varepsilon>0$. Consider $N \in \mathbb{N}$ such  that $1/2^N < \varepsilon$ and define the partition
				
				%    \begin{equation}
					%        \xi = \lbrace [w] : w \in \mathcal{L}_N(X_f) \rbrace.
					%    \end{equation}
				
				%   This partition satisfies the following two conditions for every $w \in \mathcal{L}_N(X_f)$:
				%  \begin{enumerate}
					%        \item $\mbox{diam}([w]) < \varepsilon$,
					%        \item $\mu (\partial [w]) = 0$.
					%    \end{enumerate}
				%    \end{comment}
			%Computing the entropy of $\nu_{n_j}$ with respect to the partition $\vee_{i=0}^{n_j-1}\sigma^{-i}\xi$ we obtain
			\begin{equation}
				H_{\nu_{n_j}}  \left( \bigvee_{i=0}^{n_j-1} \sigma^{-i} \xi \right)
				=  - \sum_{w \in \mathcal{L}_{n_j}\left( X_f \right) {\cap \mathcal{B}}} \nu_{n_j}([w]_0)\log \nu_{n_j}([w]_0) 
				{ = \log}|\mathcal{L}_{n_j}\left( X_f \right) \cap \mathcal{B}|. \label{valordeH}
			\end{equation}

			%						\nonumber \\ \nonumber
			%            & = & - \sum_{w \in \mathcal{L}_{n_j} \left( X_f \right)} \left( \dfrac{1}{\vert K_{n_j} \vert} \sum_{x \in K_{n_j}} \delta_x ([w])  \right) \log \left( \dfrac{1}{\vert K_{n_j} \vert} \sum_{x \in K_{n_j}} \delta_x ([w])  \right) \\ \nonumber
			%            & \geq & -\sum_{w \in \mathcal{L}_{n_j} \left( X_f \right) \cap \mathcal{B}} \left( \dfrac{1}{\vert K_n \vert} \sum_{x \in K_{n_j}} \delta_x ([w])  \right) \log \left( \dfrac{1}{\vert K_{n_j} \vert} \sum_{x \in K_{n_j}} \delta_x ([w])  \right) \\ 
			%            & = & \log \vert K_{n_j} \vert. \label{valordeH}
			%    \end{equation}
		%    The equality (\ref{valordeH}) holds because every $w \in \mathcal{L}_{n_j}(X_f) \cap \mathcal{B}$ has $\nu_{n_j}$-measure $\frac{1}{\vert K_{n_j} \vert}$, and the word $w$ occurs exactly once in elements of $K_{n_j}$.

		Let $q,n \in \mathbb{N}$ with $1 < q < n$ and define $a(t) = \lfloor  \frac{n-t}{q}\rfloor$ for $0 \leq t < q$. Note that $a(0) \geq a(1) \geq \cdots \geq  a(q-1)$. 
		{For every $0 \leq t \leq q-1$, we define} 
		\begin{equation*}
			S_t = \lbrace 0 , 1 , \ldots , t-1 , t + a(t)q, t+a(t)q+1, \ldots, n-1 \rbrace.
		\end{equation*}
		So, {for any such $t$}, we can rewrite $\lbrace 0, 1, \ldots, n-1 \rbrace$ as follows
		\begin{equation}
			\label{particionintervalo}
			\lbrace 0, 1, \ldots , n-1 \rbrace = \lbrace t + rq  + i \vert 0 \leq r < a(t), 0 \leq i < q \rbrace \cup S_t.
		\end{equation} 
		%%$S= \lbrace 0, 1 , \ldots , n-1 \rbrace \setminus \lbrace j + rq +i : 0 \leq r \leq a(j)-1, 0 \leq i \leq q-1 \rbrace$.
		
		%Note that $S= \lbrace 0, 1 , \ldots , n-1 \rbrace \setminus \lbrace j,j+1,..., j + a(j)q-1\rbrace.$
		
		%$S = \lbrace 0 , 1 , \ldots , j-1 , j + a(j)q, j+a(j)q+1, \ldots, n-1 \rbrace$.
		Observe that
		\begin{equation*}
			t + a(t)q = t + \left\lfloor \dfrac{n-t}{q}\right\rfloor q \geq t + \left( \dfrac{n-t}{q} - 1 \right) q = t+ n -t -q = n-q. 
		\end{equation*}
		Thus, the cardinality of $S_t$ is at most $2q$.

		Using \eqref{particionintervalo} we get
		
		\begin{equation}
			\label{particiondeljoin}
			\bigvee_{i=0}^{n_j-1} \sigma^{-i} \xi = \left( \bigvee_{r=0}^{a(t)-1} \sigma^{-(rq+t)} \bigvee_{i=0}^{q-1} \sigma^{-i} \xi \right) \vee \bigvee_{l \in S_t} \sigma^{-l} \xi.
		\end{equation}
		%%\rp{I THINK THE $\vee$ I COLORED BLUE SHOULD BE REMOVED, RIGHT?} A: yes
		Combining {\eqref{valordeH}}, \eqref{particiondeljoin} and Theorem \ref{theoreminequalityentropy} we obtain
		
		%(\ref{valordeH}), the fact $\vert  \mathcal{L}_n(X_f) \cap B \vert = \vert K_n \vert$, (\ref{particionintervalo}) and (\ref{cuentas_de_j})  we have that
		\begin{eqnarray}
			\log \vert \mathcal{L}_{n_j} \left( X_f \right) \cap \mathcal{B} \vert & {=} & H_{\nu_{n_j}} \left( \bigvee_{i=0}^{n_j-1} \sigma^{-i} \xi \right) \nonumber \\ \nonumber
			& \leq & \sum_{r=0}^{a(t)-1} H_{\nu_{n_j}} \left( \sigma^{-(rq+t)} \bigvee_{i=0}^{q-1} \sigma^{-i} \xi \right) + \sum_{l \in S_t} H_{\nu_{n_j}} \left( \sigma^{-l} \xi \right) \\
			& \leq & \sum_{r=0}^{a(t)-1} H_{\nu_{n_j} \circ \sigma^{-(rq+t)}} \left( \bigvee_{i=0}^{q-1} \sigma^{-i} \xi \right) + 2q \log (l). \label{otracotaparahtop}
		\end{eqnarray}
		
		For the inequality $\sum_{l \in S_t} H_{\nu_{n_j}}(\sigma^{-l}\xi) \leq 2q \log (l)$ we apply Theorem \ref{corolariocotaH}.
		{We note that} for each $0 \leq t \leq q-1$, we have
		
		\begin{equation}
			\left( a(t)-1 \right)q + t \leq \left\lfloor \dfrac{n-t}{q} - 1  \right\rfloor q + t = n - q.
		\end{equation}
		Summing the first term in the last line of (\ref{otracotaparahtop}) over $t$ from $0$ to $q-1$, and using that the numbers $\lbrace t+rq : 0 \leq t \leq q-1, 0 \leq r \leq a(t)-1 \rbrace$ are all distinct and are all no greater than $n-q$, yields
		
		%\begin{eqnarray*}
		%       q \log \vert \mathcal{L}_n \left( X_f \right) \cap B \vert & \leq & \sum_{j=0}^{q-1} \left( \sum_{r=0}^{a(j)-1} H_{\nu_n \circ \sigma^{-rq+j}} \left( \bigvee_{i=0}^{q-1} \sigma^{-i} \xi \right) \right) + 2q^2 \log (k) \\
		%      & = & \sum_{r=0}^{a(0)-1}H_{\nu_n \circ \sigma^{-(rq)}} \left( \bigvee_{i=0}^{q-1} \sigma^{-i} \xi \right) + \cdots + \sum_{r=0}^{a(q-1)-1} H_{\nu_n \circ \sigma^{-(rq+q-1)}} \left( \bigvee_{i=0}^{q-1} \sigma^{-i} \xi \right)\\
		%      & & \,\,\,\,\,\, + 2q^2 \log(k) \\
		%     & = & \sum_{p=0}^{n-1} H_{\nu_n \circ \sigma^{-p}} \left( \bigvee_{i=0}^{q-1} \sigma^{-i} \xi \right) + 2q^2 \log(k) .
		%\end{eqnarray*}
		
		\begin{eqnarray}
			& & \sum_{t=0}^{q-1} \left( \sum_{r=0}^{a(t)-1}  H_{\nu_{n_j}  \circ \sigma^{-(rq+t)}} \left( \bigvee_{i=0}^{q-1}\sigma^{-i} \xi \right) \right)   =  \sum_{r=0}^{a(0)-1}H_{\nu_{n_j} \circ \sigma^{-(rq)}} \left( \bigvee_{i=0}^{q-1} \sigma^{-i} \xi \right) + \cdots \nonumber \\
			& & \cdots + \sum_{r=0}^{a(q-1)-1} H_{\nu_{n_j} \circ \sigma^{-(rq+q-1)}} \left( \bigvee_{i=0}^{q-1} \sigma^{-i} \xi \right) \nonumber \\
			& & = \sum_{p=0}^{{n_j}-1} H_{\nu_{n_j} \circ \sigma^{-p}} \left( \bigvee_{i=0}^{q-1} \sigma^{-i} \xi \right) \label{valorfinalH}.
		\end{eqnarray}
		
		Using \eqref{otracotaparahtop} and \eqref{valorfinalH} we get
		
		\begin{equation*}
			q \log \vert \mathcal{L}_{n_j} (X_f) \cap \mathcal{B} \vert \leq \sum_{p=0}^{n_j-1} H_{\nu_{n_j} \circ \sigma^{-p}} \left( \bigvee_{i=0}^{q-1} \sigma^{-i} \xi \right) + \dfrac{2q^2}{n_j} \log (l).
		\end{equation*}
		
		{Now, we divide by $n_j$ and apply Theorem~\ref{affine} (with $p_i = \frac{1}{n_j}$), to obtain}
		
		\begin{equation}
			\dfrac{q}{n_j} \log \vert \mathcal{L}_{n_j} \left( X_f \right) \cap \mathcal{B} \vert \leq H_{\mu_{n_j}} \left( \bigvee_{i=0}^{q-1} \sigma^{-i} \xi \right) + \dfrac{2q^2}{n_j^2} \log (l).
			\label{desigualdadfinal}
		\end{equation}
		
		%We know that the boundaries of  $\bigvee_{i=0}^{q-1} \sigma^{-i} \xi  = \lbrace [w] : w \in \mathcal{L}_{N+q-1} \left( X_f \right) \rbrace$ have $\mu-$measure zero. Hence, for each $w \in \mathcal{L}_{N+q-1} \left( X_f \right)$ and for a subsequence $\lbrace \mu_{n_{j_k}} \rbrace$ of the subsequence in \eqref{subsequenceofentropyofB}
		
		%    \begin{equation*}
			%        \lim_{k \rightarrow \infty} \mu_{n_{j_k}} \left( [w] \right) = \mu \left( [w] \right).
			%    \end{equation*}
		We will also use that 
		\begin{equation}
			\lim_{k \rightarrow \infty} H_{\mu_{n_{j_k}}} \left( \bigvee_{i=0}^{q-1} \sigma^{-i} \xi \right) = H_\mu \left( \bigvee_{i=0}^{q-1} \sigma^{-i} \xi \right),
			\label{valordellimiteH}
		\end{equation}
		which is obtained using the definition of weak* convergence.
		Then, combining \eqref{desigualdadfinal} and \eqref{valordellimiteH} yields
		
		\begin{eqnarray*}
			qh(\mathcal{B}) & = & \lim_{k \rightarrow \infty} \dfrac{q}{n_{j_k}} \log \vert \mathcal{L}_{n_{j_k}} \left( X_f \right) \cap \mathcal{B} \vert \\
			& \leq & \lim_{k \rightarrow \infty} H_{\mu_{n_{j_k}}}   \left( \bigvee_{i=0}^{q-1} \sigma^{-i} \xi \right) + \lim_{k \rightarrow \infty} \dfrac{2q^2}{n_{j_k}} \log (l) \\
			& = & H_\mu \left( \bigvee_{i=0}^{q-1} \sigma^{-i} \xi \right).
		\end{eqnarray*}
		
		{Now, by definition of $h_{\mu}(X_f)$,}
		
		%Thus, since $\xi$ is a generator and by the Kolmogorov-Sinai Theorem. \cite[Theorem 4.17]{Walters}.
		\begin{equation*}
			h(B) \leq \lim_{q \rightarrow \infty} {\frac{1}{q}} H_\mu  \left( \bigvee_{i=0}^{q-1} \sigma^{-i} \xi \right) = h_\mu(X_f).
		\end{equation*}   
		%%  $q\cdot h(\mathcal{B}) \leq H_{\mu} \left( \bigvee_{i=0}^{q-1} \sigma^{-i} \xi \right)$. We conclude that $h(\mathcal{B}) \leq h_\mu(X_f)$.
	\end{proof}

	\begin{Lemma}
		\label{lemalenghtwords}
		For every $M \in \mathbb{N}$, there exists $\tau$ such that given $v \in \mathcal{G}(M)$, there exist words $u,w$ with $\vert u \vert \leq \tau$, $\vert w \vert \leq \tau$ for which $uvw \in \mathcal{G}$.
	\end{Lemma}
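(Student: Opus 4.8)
This is exactly condition (3) of Theorem~\ref{theo-clim-thomp}, and the plan is to verify it by \emph{padding with zeros}. Fix $M \in \mathbb{N}$ and recall that we are in the positive-entropy case, so $\alpha_{f} > 0$. Set
\[
\tau = \left\lfloor \tfrac{2M}{\alpha_{f}} \right\rfloor + 1,
\]
so that $\alpha_{f}\tau > 2M$, and take $u = w = 0^{\tau}$. Given $v \in \mathcal{G}(M)$, write $v = abc$ with $a \in \mathcal{C}^p = \mathcal{B}$, $c \in \mathcal{C}^s = \mathcal{B}$, $|a| \leq M$, $|c| \leq M$, and $b \in \mathcal{G}$. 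The goal is to show $0^{\tau} v 0^{\tau} \in \mathcal{G}$, which immediately gives the lemma with $\tau$ depending only on $M$.

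First I would confirm membership in the language, $0^{\tau} v 0^{\tau} \in \mathcal{L}(X_f)$: since $v \in \mathcal{L}(X_f)$ and $f$ is nondecreasing, any window of length $p$ in $0^{\tau} v 0^{\tau}$ has letter-sum equal to that of its intersection with $v$, which is a subword of $v$ of length at most $p$ and hence has sum at most $f(p)$; so the bounded-density constraint survives. (This also follows from hereditarity of $X_f$.) The heart of the argument is then the estimate on an arbitrary prefix $P$ of $0^{\tau} v 0^{\tau}$. If $P$ lies inside the leading block $0^{\tau}$, its average is $0 < \alpha_{f}$. Otherwise $P = 0^{\tau} q$ possibly followed by trailing zeros (which only lower the average), where $q$ is a prefix of $v = abc$. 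I would split the sum of $q$ three ways according to how far $q$ reaches: the portion inside $a$ contributes at most $|a| \leq M$, the portion inside $b$ is a \emph{prefix of} $b$ and so, because $b \in \mathcal{G}$, has sum strictly less than $\alpha_{f}$ times its length, and the portion inside $c$ contributes at most $|c| \leq M$. Letting $\ell$ denote the length of the middle ($b$-)portion, this yields $\sum_i q_i < 2M + \alpha_{f}\ell$ while $|P| \geq \tau + \ell$, so the average of $P$ is at most $\tfrac{2M + \alpha_{f}\ell}{\tau + \ell}$, which is $<\alpha_{f}$ precisely because $2M < \alpha_{f}\tau$. The degenerate cases ($b = \epsilon$, or $q$ not reaching $b$) fall under the same estimate with $\ell = 0$, where the average is at most $\tfrac{2M}{\tau} < \alpha_{f}$.

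Finally, the definitions of $\mathcal{G}$ and $\mathcal{B}$ are symmetric under reversal: suffixes of $b$ also have average $<\alpha_{f}$, and $a,c$ play interchangeable roles, so the identical computation bounds the average of every suffix of $0^{\tau} v 0^{\tau}$. Thus every nonempty prefix and suffix has average strictly below $\alpha_{f}$, giving $0^{\tau} v 0^{\tau} \in \mathcal{G}$ as required.

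The only step I expect to require care is the three-way bookkeeping of $\sum_i q_i$, and in particular checking that the strict bound coming from $b \in \mathcal{G}$ is not destroyed when it is diluted by the $\tau$ padding zeros; this is exactly what the choice $\alpha_{f}\tau > 2M$ guarantees. Everything else — language membership and the symmetric suffix estimate — is routine.
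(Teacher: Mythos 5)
Your proposal is correct and is essentially the paper's own argument: pad $v$ on both sides by $0^{\tau}$, choose $\tau$ large enough that the at most $2M$ symbols contributed by the two $\mathcal{B}$-blocks are diluted below density $\alpha_{f}$, and bound every prefix/suffix average by splitting it into its $\mathcal{B}$-portions and its $\mathcal{G}$-portion (your explicit verification that $0^{\tau}v0^{\tau} \in \mathcal{L}(X_f)$ and your handling of the strict inequality via $\alpha_{f}\tau > 2M$ are, if anything, slightly more careful than the paper's write-up, which only concludes ``$\leq \alpha_{f}$''). The one discrepancy is that your bounds (e.g., ``the portion inside $a$ contributes at most $|a| \leq M$'') and the choice $\tau = \lfloor 2M/\alpha_{f}\rfloor + 1$ implicitly assume the binary alphabet fixed at the start of Section 3; since the lemma is also invoked for alphabets $\{0,\dots,\lfloor f(1)\rfloor\}$ in Theorem~\ref{thm:main}, the paper instead takes $\tau = \lceil 2M\lfloor f(1)\rfloor/\alpha_{f}\rceil$, and your argument generalizes verbatim once each per-letter bound is multiplied by $\lfloor f(1)\rfloor$.
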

	
	\begin{proof}
		Let $M \in \mathbb{N}$ and $v \in \mathcal{G}(M)$. This implies that there exist $u', w' \in \mathcal{B}, v' \in \mathcal{G}$ such that $v = u' v' w'$ and $\vert u' \vert \leq M, \vert w' \vert \leq M$. Choose $u=w=0^\tau$, with $\tau =  \left\lceil \dfrac{2M \lfloor f(1) \rfloor}{\alpha_{f}}  \right\rceil$.
		
		%One can prove that that  $uvw \in \mathcal{G}$ by cases using that $u'$ and $w'$ are small (smaller than $M$) and that $v' \in \mathcal{G}$.  
		%For example,
		Let $z \in \mbox{Pre}(0^{ \tau}u'v'w'0^{ \tau})$. Consider the following sets, $N_1 = \left[ 1 , \tau \right] $, $N_2 = \left[ \tau + 1 , \tau + \vert u' \vert \right] \cup \left[ \tau + \vert u' v' \vert +1, \tau + \vert  u'v'w' \vert \right] $ and $N_3 = \left[ \tau + \vert u'  \vert +1, \tau + \vert u'v' \vert   \right]$. Note that $N_2$ corresponds to the section where $u'$ and $w'$ appear and $N_3$ where $v'$ appears. 
		Also, we can assume that $\vert z \vert \geq  \tau$ (otherwise we are considering that $z \in \text{Pre}(0^\tau)$), then
		
		%\begin{comment}
		%\begin{eqnarray*}
		%\frac{1}{\vert z \vert} \sum_{i=1}^{\vert z \vert } z_i & = &
		%    \dfrac{1}{ 2 \tau + \vert u'v'w' \vert } \left( \sum_{i=1}^{\vert u' \vert } u'_i + \sum_{i=1}^{\vert v' \vert } v'_i + \sum_{i=1}^{\vert w' \vert } w'_i \right) \\
		%    & \leq & \dfrac{2M + \vert v' \vert }{ \tau + \vert v \vert}\\
		%    & \leq & \dfrac{2M + \vert v' \vert }{ \left( \dfrac{2M+\vert v' \vert + \alpha_{f} \vert v \vert }{   \alpha_{f}} \right)} \\
		%    & = & \alpha_{f} \left( \dfrac{2M + \vert v' \vert}{2M+\vert v' \vert + \alpha_{f} \vert v \vert} \right) \\
		%   & \leq & \alpha_{f}.
		%\end{eqnarray*}
		%\end{comment}
		
		\begin{eqnarray*}
			& & \dfrac{1}{\vert z \vert} \sum_{i=1}^{\vert z \vert} z_i = \dfrac{1}{\vert z \vert } \left( \sum_{i \in N_1 \cap [1, \vert z \vert ] } z_i + \sum_{i \in N_2  \cap [1, \vert z \vert ] } z_i + \sum_{i \in N_3  \cap [1, \vert z \vert ] } z_i \right) \\
			& = & \dfrac{1}{\vert z \vert } \left( \frac{\vert N_1 \cap [1, \vert z \vert ] \vert}{\vert N_1 \cap [1, \vert z \vert ] \vert}  \sum_{i \in N_2  \cap [1, \vert z \vert ] } z_i \,\, + \,\, \frac{\vert N_3 \cap [1, \vert z \vert ] \vert}{\vert N_3 \cap [1, \vert z \vert ] \vert} \sum_{i \in N_3 \cap [ 1, \vert z \vert]} z_i \right)
			%\,\,\,\, \text{since } v' \in \mathcal{G} 
			\\
			& \leq & \dfrac{1}{\vert z \vert} \left( \frac{\vert N_1 \cap [1, \vert z \vert ] \vert}{\vert N_1 \cap [1, \vert z \vert ] \vert} 2M  \lfloor f(1) \rfloor  + \alpha_{f} \vert N_3 \cap [1, \vert z \vert ] \vert \right)  %\,\,\,\, \text{since }  \vert z \vert \geq \tau, \text{ then } \vert N_1 \cap [1,\vert z \vert ] \vert = \tau 
			\\
			& = & \dfrac{1}{\vert z \vert} \left( \vert N_1 \cap [1, \vert z \vert ] \vert  \frac{2M \lfloor f(1) \rfloor}{\tau}  + \alpha_{f} \vert N_3 \cap [1, \vert z \vert ] \vert \right)  %\,\,\,\, \text{since } \tau \geq \dfrac{2M \lfloor f(1) \rfloor }{\alpha_{f}}  
			\\
			& \leq & \dfrac{1}{\vert z \vert} \left( \alpha_{f}  \vert N_1 \cap [1, \vert z \vert ] \vert + \alpha_{f} \vert N_3 \cap [1, \vert z \vert ] \vert  \right) \\
			& = & \alpha_{f} \left( \dfrac{\vert N_1 \cap [1, \vert z \vert ] \vert + \vert N_3 \cap [1, \vert z \vert ] \vert }{\vert z \vert } \right)\\
			& \leq & \alpha_{f} 
		\end{eqnarray*}		
		Here, the first inequality holds since $v' \in \mathcal{G}$, the second equality holds because 
		$\vert N_1 \cap [1,\vert z \vert ] \vert = \tau$ (using $\vert z \vert \geq \tau$), and the second inequality holds since $\tau \geq \dfrac{2M \lfloor f(1) \rfloor }{\alpha_{f}}$.
		
		The proof for $z  \in \mbox{Suf}(0^{ \tau}u'v'w'0^{ \tau})$ is similar. 
	\end{proof}

	\begin{Theorem}
		\label{thm:main}
		Let $X_f$ be a bounded density shift. If every measure of maximal entropy $\mu$ has the property that $\sum_{i} ^{\lfloor f(1) \rfloor} i\mu([i]_0) < \alpha_{f}$, then $X_f$ is intrinsically ergodic, and
		
		\begin{equation}
			\mu_n = \dfrac{1}{\vert \mbox{Per}(n)\vert} \sum_{x \in \mbox{Per}(n)} \delta_x
		\end{equation}
		converges to the measure of maximal entropy in the weak* topology. 
	\end{Theorem}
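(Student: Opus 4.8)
The plan is to verify the three hypotheses of the Climenhaga--Thompson theorem (Theorem~\ref{theo-clim-thomp}) for the decomposition $\mathcal{L}(X_f) = \mathcal{B}\mathcal{G}\mathcal{B}$ supplied by Lemma~\ref{LemaLenguaje}, taking $\mathcal{C}^p = \mathcal{C}^s = \mathcal{B}$ and $\mathcal{G}$ as defined above. Two of the three conditions are already in hand: condition (1) is precisely Lemma~\ref{lemaespecifiacion} (which in fact furnishes \emph{periodic} specification), and condition (3) is precisely Lemma~\ref{lemalenghtwords}. The entire weight of the argument therefore falls on condition (2), namely $h(\mathcal{C}^p \cup \mathcal{C}^s) = h(\mathcal{B}) < h_{top}(X_f)$, and this is where the standing hypothesis on measures of maximal entropy enters.

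To establish condition (2) I would argue by contradiction. Since $\mathcal{B} \subseteq \mathcal{L}(X_f)$ we always have $h(\mathcal{B}) \leq h_{top}(X_f)$, so it suffices to rule out equality. Suppose $h(\mathcal{B}) = h_{top}(X_f)$. Proposition~\ref{lemaentropia} provides an invariant measure $\mu \in M(X_f, \sigma)$ satisfying both $\sum_{i=0}^{\lfloor f(1)\rfloor} i\,\mu([i]_0) \geq \alpha_{f}$ and $h_\mu(X_f) \geq h(\mathcal{B})$. Combining these gives $h_\mu(X_f) \geq h(\mathcal{B}) = h_{top}(X_f)$, and since no invariant measure can exceed the topological entropy, $\mu$ is a measure of maximal entropy. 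But then $\mu$ is an MME with $\sum_i i\,\mu([i]_0) \geq \alpha_{f}$, directly contradicting the hypothesis that every MME satisfies $\sum_i i\,\mu([i]_0) < \alpha_{f}$. Hence $h(\mathcal{B}) < h_{top}(X_f)$, as required.

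With all three conditions verified, Theorem~\ref{theo-clim-thomp} yields that $X_f$ is intrinsically ergodic. For the final assertion, I would invoke the \emph{furthermore} clause of the same theorem: because Lemma~\ref{lemaespecifiacion} gives $\mathcal{G}$ periodic specification, the periodic-point measures $\mu_n$ converge in the weak* topology to the unique measure of maximal entropy.

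The only genuinely substantive step is the contradiction argument for condition (2); everything else is assembling the preceding lemmas into the Climenhaga--Thompson machine. The conceptual crux, and the step I would be most careful about, is that the hypothesis constrains only measures of maximal entropy, so one cannot directly bound $h(\mathcal{B})$ by it; the role of Proposition~\ref{lemaentropia} is exactly to convert the \emph{a priori} possibility that $h(\mathcal{B})$ is maximal into the existence of a concrete MME with large first moment $\sum_i i\,\mu([i]_0) \geq \alpha_{f}$, which is what the hypothesis forbids. It is worth noting in passing that the hypothesis can hold only when $\alpha_{f} > 0$, since every measure has $\sum_i i\,\mu([i]_0) \geq 0$; thus we are automatically in the positive-entropy regime where $h_{top}(X_f) > 0$.
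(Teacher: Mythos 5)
Your proposal is correct and follows essentially the same route as the paper: the same decomposition $\mathcal{B}\mathcal{G}\mathcal{B}$, with conditions (1) and (3) supplied by Lemmas~\ref{lemaespecifiacion} and~\ref{lemalenghtwords}, and condition (2) obtained from Proposition~\ref{lemaentropia} together with the hypothesis on measures of maximal entropy (the paper phrases this directly -- the measure $\mu'$ from the proposition cannot be an MME, so $h(\mathcal{B}) \leq h_{\mu'}(X_f) < h_{top}(X_f)$ -- rather than by contradiction, but the content is identical). The only cosmetic difference is the case $\alpha_f = 0$, which the paper handles by noting the delta measure at $^\infty 0^\infty$ is the unique invariant measure, while you dispose of it by observing the hypothesis is then unsatisfiable; both are valid.
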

	
	\begin{proof}
		If $\alpha_{f}=0$, then since all sequences have frequency $0$ of non-$0$ symbols, the unique invariant measure is the delta measure of 
		$\left.^\infty 0^{\infty} \right.$. 
		
		If $\alpha_{f}>0$ we will obtain the result using Theorem \ref{theo-clim-thomp}. First note that $\mathcal{B}= \mathcal{C}^p = \mathcal{C}^s$. Using Lemma \ref{LemaLenguaje} we obtain $\mathcal{L}(X) = \mathcal{C}^p \mathcal{G} \mathcal{C}^s$.  Now we will check the numbered hypotheses of Theorem \ref{theo-clim-thomp}.
		
		\begin{enumerate}
			\item Lemma \ref{lemaespecifiacion} gives us that $\mathcal{G}$ has specification.
			\item Let $\mu'$ be the measure constructed in Lemma \ref{lemaentropia}. By hypothesis it cannot be a measure of maximal entropy. Thus, $h(\mathcal{C}^p \cup \mathcal{C}^s) = h(\mathcal{B}) \leq h_{\mu'}(X_f)< h_{\mbox{top}}(X_f)$.
			\item We obtain this property using Lemma \ref{lemalenghtwords}. 
		\end{enumerate}
	\end{proof}
	
	The main application of the previous result that we have is the following. 
	
	\begin{corollary}
		\label{cor:simple}
		Let $X_f$ be a bounded density shift. If $\alpha_{f} > \sum_{i=1}^{\lfloor f(1) \rfloor} \frac{i}{i+1}$ then  $\sum_{i} ^{\lfloor f(1) \rfloor} i\mu([i]_0) < \alpha_{f}$ for every measure of maximal entropy $\mu$. This implies that $X_f$ is intrinsically ergodic, and
		
		\begin{equation}
			\mu_n = \dfrac{1}{\vert \mbox{Per}(n)\vert} \sum_{x \in \mbox{Per}(n)} \delta_x
		\end{equation}
		converges to the measure of maximal entropy in the weak* topology. 
	\end{corollary}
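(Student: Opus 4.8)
Write $m=\lfloor f(1)\rfloor$ and $\bar x(\mu)=\sum_{i=0}^{m} i\,\mu([i]_0)$. The plan is to reduce everything to Theorem~\ref{thm:main}: it suffices to prove that every measure of maximal entropy $\mu$ satisfies $\bar x(\mu)<\alpha_{f}$, because then Theorem~\ref{thm:main} yields both intrinsic ergodicity and, since $\mathcal{G}$ has periodic specification (Lemma~\ref{lemaespecifiacion}), the asserted weak* convergence of the periodic-orbit measures $\mu_n$. Now the hypothesis reads $\alpha_{f}>\sum_{i=1}^{m}\frac{i}{i+1}$, and since $\frac{i}{i+1}\ge\frac12$ for every $i\ge1$ we have $\sum_{i=1}^{m}\frac{i}{i+1}\ge \frac m2$; hence it is actually enough to establish the clean bound $\bar x(\mu)\le \frac m2$ for every measure of maximal entropy. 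Because $\mu\mapsto h_\mu$ is affine, almost every ergodic component of $\mu$ is again a measure of maximal entropy, and $\bar x(\mu)$ is the average of the components' values, so I may assume $\mu$ is ergodic.

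The key idea for the bound is to read $\bar x(\mu)$ off the ``weight profile'' of typical words. For $w\in\mathcal{L}_n(X_f)$ set $|w|=\sum_{j=1}^{n}w_j$ and let $N_s=\vert\{w\in\mathcal{L}_n(X_f):|w|=s\}\vert$. By the pointwise ergodic theorem the $\mu$-typical length-$n$ word satisfies $|w|=(\bar x(\mu)+o(1))n$, while by the Shannon--McMillan--Breiman theorem there are $e^{(h_{top}(X_f)+o(1))n}$ such typical words; as they all lie in a weight band of width $o(n)$ around $\bar x(\mu)\,n$, there exists $s_n$ with $s_n/n\to\bar x(\mu)$ and $\frac1n\log N_{s_n}\to h_{top}(X_f)$. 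Since also $\frac1n\log N_s\le\frac1n\log\vert\mathcal{L}_n(X_f)\vert\to h_{top}(X_f)$, the weight $\bar x(\mu)\,n$ is an asymptotic maximizer of $s\mapsto N_s$.

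It remains to locate this maximizer, and this is where heredity enters: by heredity $\mathcal{L}_n(X_f)$ is a down-set (order ideal) for the coordinatewise order on $\{0,1,\dots,m\}^n$. Writing $W_s$ for the number of \emph{all} words of weight $s$ (the rank sizes of a product of chains, which are symmetric and unimodal with peak at $s=mn/2$), the normalized matching property of the product of chains forces the densities $N_s/W_s$ to be non-increasing in $s$; consequently, for $s\ge mn/2$ both factors of $N_s=(N_s/W_s)\,W_s$ are non-increasing, so $N_s$ cannot peak beyond $mn/2$, whence $\bar x(\mu)\le \frac m2\le\sum_{i=1}^{m}\frac{i}{i+1}<\alpha_{f}$. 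I expect the main obstacle to be exactly this final combinatorial step: verifying the normalized matching (equivalently, an appropriate shadow) inequality for products of chains, and making the identification of $\bar x(\mu)\,n$ with the peak of $N_s$ rigorous and uniform in $n$ (the concavity/strict unimodality needed to exclude a plateau to the right of $mn/2$). In the binary case $m=1$ emphasized in the paper, this reduces to the classical LYM inequality on the Boolean lattice, the resulting bound $\bar x(\mu)\le\frac12=\sum_{i=1}^{1}\frac{i}{i+1}$ is sharp (attained by the full $2$-shift), and no machinery beyond LYM is required.
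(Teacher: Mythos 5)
Your reduction to Theorem~\ref{thm:main} is exactly the paper's, and your target bound $\bar x(\mu)\le m/2$ is in fact true (it is even stronger than the bound the paper proves), but your combinatorial argument for it has a genuine gap --- the very one you flag at the end and do not close. From ``$\mathcal{L}_n(X_f)$ is a down-set'' plus normalized matching you get that $N_s/W_s$ is non-increasing, and hence that $N_s$ is non-increasing for $s\ge mn/2$; but non-increasing does not prevent $N_s$ from remaining of size $e^{(h_{top}(X_f)-o(1))n}$ on a plateau of weights extending well past $mn/2$, and your Shannon--McMillan--Breiman step only places $\bar x(\mu)n$ at \emph{some} weight achieving the maximal exponential growth rate, not at the literal maximizer of $s\mapsto N_s$. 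So no contradiction is reached if $\bar x(\mu)>m/2$. The gap is fixable, but it needs the quantitative form of your density inequality rather than mere monotonicity: for $s\ge\lceil mn/2\rceil$ one has $N_s\le N_{\lceil mn/2\rceil}\,W_s/W_{\lceil mn/2\rceil}\le \vert\mathcal{L}_n(X_f)\vert\, W_s/W_{\lceil mn/2\rceil}$, and $\frac1n\log W_{\lfloor\beta n\rfloor}$ converges to a strictly concave function $H_m(\beta)$ (the entropy rate of sums of $n$ symbols from $\{0,\dots,m\}$) whose maximum $\log(m+1)$ is attained only at $\beta=m/2$. If $\bar x(\mu)=\beta>m/2$, taking logarithms along $s_n$ gives $h_{top}(X_f)\le h_{top}(X_f)+H_m(\beta)-\log(m+1)<h_{top}(X_f)$, the desired contradiction; without this strict-concavity input (or something equivalent) the proof is incomplete. (The normalized matching property of products of chains that you also worry about is a classical fact due to Harper, so that part is not the issue.)

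You should also compare with the paper's proof, which is a few lines and avoids all of this machinery: by \cite[Corollary 4.6]{garcia2019extender} together with heredity, every measure of maximal entropy satisfies $\mu([i]_0)\le\mu([i-1]_0)$; since $\mu([0]_0),\dots,\mu([i]_0)$ are then non-increasing and sum to at most $1$, one gets $\mu([i]_0)\le\frac{1}{i+1}$, hence $\sum_{i=1}^{m}i\,\mu([i]_0)\le\sum_{i=1}^{m}\frac{i}{i+1}<\alpha_f$, and Theorem~\ref{thm:main} finishes. Note, incidentally, that this same monotonicity yields your cleaner bound in one line: by Chebyshev's sum inequality, any non-increasing probability vector $(p_i)_{i=0}^{m}$ satisfies $\sum_{i=0}^{m}i\,p_i\le\frac m2$. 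So if you want the sharper statement $\bar x(\mu)\le m/2$, the route through the extender-set monotonicity of \cite{garcia2019extender} gives it immediately, with no need for normalized matching, unimodality, or SMB; the real content that cannot be obtained purely combinatorially from ``down-set'' structure is precisely this measure-theoretic monotonicity of cylinder masses for measures of maximal entropy.
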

	
	\begin{proof}
		Using \cite[Corollary 4.6]{garcia2019extender} and the fact that bounded density shifts are hereditary we have that for any measure of maximal entropy $$\mu( [i]_0 )\leq \mu([i-1]_0).$$ Since $\mu$ is a probability measure this implies that $\mu([i]_0)\leq 1/(i+1)$. Thus, 
		$$
		\sum_{i=1}^{\lfloor f(1) \rfloor} i \cdot \mu ([i]_0) \leq \sum_{i=1}^{\lfloor f(1) \rfloor} \frac{i}{i+1}.
		$$
		We obtain the result using Theorem \ref{thm:main}.
	\end{proof}
	
	\begin{remark}
		In particular, every binary bounded density shift with $\alpha_{f} > 1/2$ is intrinsically ergodic. 
	\end{remark}
	
	Furthermore, we suspect that the hypothesis of Theorem \ref{thm:main} may always be satisfied, at least for binary subshifts, leading to the following questions.
	\begin{question}
		\label{question}
		Let $X$ be a hereditary binary subshift with positive topological entropy. Is it true that for any measure of maximal entropy $\mu$ we have that $\mu([1]_0)<\sup_{\nu\in M(X)} \nu ([1]_0)$?
	\end{question}
	A reason to suspect Question~\ref{question} is true is that if $X$ is hereditary and $\mu([1]_0)$ achieves its (positive) supremum, then it should be possible to increase the entropy of $\mu$ by allowing a small proportion of randomly chosen $1$ symbols to change to $0$s. Some circumstantial evidence is given by the class of $\mathcal{B}$-free shifts, for which it is known that maximal entropy is achieved by such a procedure	(cf. Theorem 2.1.8 of \cite{Bfree}). We also ask the corresponding question for bounded density shifts on larger alphabets. 
	
	\begin{question}
		Is it true that for every bounded density shift we have that $$\sum_{i} ^{\lfloor f(1) \rfloor} i\mu([i]_0) < \alpha_{f}$$ for every measure of maximal entropy?
	\end{question}
	
	%	a measure of maximal entropy should not give maximum concentration to a symbol. For example, in the binary fullshift the maximal entropy is not supported in the cylinder $[1]_0$.	(MAYBE A REFERENCE FOR B-FREE SUBSHIFTS?)
	
	%	\textcolor{blue}{I would delete this sentence; it seems redundant with the exposition before Question~\ref{question}.} A positive answer to Question~\ref{question} would imply that for every measure of maximal entropy $\mu$ on a binary bounded density shift we have that $\mu([1]_0) < \alpha_{f} $.

	One more natural question is whether we can prove stronger properties on the unique measure of maximal entropy via arguments such as those in \cite{burns2018unique} and \cite{pavlov2022subshifts}. 
	
	\begin{question}
		Let $X_f$ be an intrinsically ergodic bounded density shift. Does the measure of maximal entropy have the $K$-property? Is it Bernoulli?
	\end{question}
	
	%\textcolor{blue}
	{We don't know how to approach this question with current techniques. All arguments we're aware of which prove Bernoulli require connection to countable-state Markov shifts, which do not seem clear for bounded density shifts. And the usual argument to prove $K$-property (without Bernoulli) is to show that the product of $(X_f, \sigma)$ with itself has a unique measure of maximal entropy, but in general Climenhaga-Thompson decompositions are not preserved under products, and we do not see any reason that bounded density structure improves the situation. We note that purely being hereditary does not necessarily imply either property, as in \cite{Bfree} it was shown that for $\mathcal{B}$-free shifts, the unique measure of maximal entropy factors onto the so-called Mirsky measure, which is of zero entropy; this precludes the $K$-property.}
	
	%	One way to obtain the $K$-property is by checking that the product system is intrinsically ergodic (as noted in \cite{ledrappier1977mesures,pavlovspec,call2022k}). With our current techniques it is not clear that we can obtain this. 

	\section{Entropy minimality and surjunctivity}
	
	%We will use the next theorem to prove our result. 
	%\begin[\cite[]{garcia2019extender}]{Theorem}
	
	%\end{Theorem}
	We will now prove a property called entropy minimality for all bounded density shifts for $\alpha_{f} > 0$ using results from \cite{garcia2019extender}. We first need some definitions.
	
	A subshift $X$ is \emph{entropy minimal} if every subshift strictly contained in $X$ has lower topological entropy. Equivalently, $X$ is entropy minimal if every measure of maximal entropy on $X$ is fully supported.
	%Recall that $\lbrace \left.^\infty0.0^\infty \right \rbrace$ is the trivial bounded density shift. In the following statements we assume that $X_f$ is non trivial.

	%\begin{Lemma}
	%\label{lemaentropiapositiva}
	%Let $X_f$ be a transitive bounded density shift. Then $h_{\mbox{top}}(X_f) > 0$.\end{Lemma}

%\begin{proof}
%Using transitivity and compactness, we can find a point $y \in X_f$ such that the set $1(y) = \lbrace n \in \mathbb{N} \, \vert \,  y_n = 1 \rbrace$ satisfies
%\begin{equation*}
%\limsup_{n \rightarrow \infty}{\dfrac{\vert 1(y) \cap \lbrace  1, \ldots , n\rbrace \vert}{n}}  > 0.
%\end{equation*}
%Then we can find $\varepsilon>0$ and a sqeuence $v^{(k)}$ of words that occurs in $y$ such that $\vert v^{(k)} \vert \rightarrow \infty$ with $k  \rightarrow \infty$ and $\sum_{i=1}^{\vert w^{(k)} \vert} w^{(k)}_i \geq \vert w^{(k)} \vert \varepsilon$. Using \cite[Lemma 3]{kwietniak2011topological} we have that $\vert w^{(k)} \vert \varepsilon \leq \log \vert \mathcal{L}_{\vert w^{(k)} \vert} (X) \vert$ for all $k > 0$. It yields

%\begin{equation*}
%   h_{\mbox{top}}(X_f) = \lim_{n \rightarrow \infty} \dfrac{1}{n} \log \vert \mathcal{L}_n (X_f) \vert = \lim_{k \rightarrow \infty} \dfrac{1}{\vert w^{(k)} \vert } \log \vert \mathcal{L}_{\vert w^{(k)} \vert} (X_f) \vert \geq \varepsilon. 
%\end{equation*}

%\end{proof}

%To prove the result we will use extender sets.

Let $X$ be a subshift and $v \in \mathcal{L}(X)$. The  \emph{extender set} of $v$ is defined by
\begin{equation*}
	E_{X_f}(v) = \lbrace y \in \lbrace 0 , 1, \ldots, \lfloor f(1) \rfloor \rbrace^\mathbb{Z} : y_{(-\infty,0]}vy_{[1, \infty )} \in X_f \rbrace.
\end{equation*}

\begin{Theorem}[Garc\'ia-Ramos and Pavlov \cite{garcia2019extender}]
	\label{GRP}
	
	Let $X$ be a subshift with $h_{top}(X)>0$, $\mu$ a measure of maximal entropy and $v,w\in \mathcal{L}(X)$. If $E_X(v)\subseteq E_X(w)$ then 
	$$
	\mu(v)\leq \mu(w)e^{h_{top}(X)(|w|-|v|)}.
	$$
\end{Theorem}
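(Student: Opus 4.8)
The plan is to separate the statement into a purely combinatorial part, governed by the extender-set containment, and a measure-theoretic part, governed by maximality of entropy; the exponential factor $e^{h_{top}(X)(|w|-|v|)}$ should emerge precisely from the length difference $|w|-|v|$. First I would upgrade the bi-infinite containment $E_X(v)\subseteq E_X(w)$ to a finite replacement property: if $pvs\in\mathcal{L}(X)$ for finite words $p,s$, then $pws\in\mathcal{L}(X)$. This is immediate, since any occurrence of $pvs$ inside a point $x\in X$ exhibits a bi-infinite context $y\in E_X(v)\subseteq E_X(w)$ into which $w$ may be spliced in place of $v$. Writing $h=h_{top}(X)$ and, for each $m$, $G_m(u)=|\{(p,s)\in\mathcal{L}_m(X)\times\mathcal{L}_m(X):pus\in\mathcal{L}(X)\}|$ (equivalently, the number of words of length $2m+|u|$ carrying $u$ in their central window), the replacement map $(p,s)\mapsto(p,s)$ is the identity on pairs and sends $v$-admissible pairs to $w$-admissible ones, so $G_m(v)\le G_m(w)$ for every $m$.

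Next I would reformulate the desired conclusion as $\mu([v]_0)e^{h|v|}\le\mu([w]_0)e^{h|w|}$, so that it suffices to compare the two sides of this scale-invariant quantity $g(u):=\mu([u]_0)e^{h|u|}$. The guiding heuristic, exact on irreducible shifts of finite type (where $\mu$ is the Parry measure and $\mu([u]_0)\asymp e^{-h|u|}\,l_{u_{\mathrm{first}}}\,r_{u_{\mathrm{last}}}$), is that for a measure of maximal entropy one has $\mu([u]_0)\approx e^{-h|u|}\gamma(u)$, where $\gamma(u)=\lim_m G_m(u)e^{-2hm}$ is the asymptotic proportion of long words carrying $u$ in their centre. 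Granting such a relation, the combinatorial inequality $G_m(v)\le G_m(w)$ passes to the limit to give $\gamma(v)\le\gamma(w)$, hence $g(v)\le g(w)$, which is exactly the claim with the stated constant. Thus the entire remaining content is the comparison between $g(u)$ and the extender-set growth $\gamma(u)$, using only that $h_\mu(X)=h_{top}(X)$.

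The hard part is making this comparison rigorous for an \emph{arbitrary} measure of maximal entropy, which need not be ergodic, unique, or fully supported, so that the clean relation $\mu([u]_0)=e^{-h|u|}\gamma(u)$ may fail and only the inequality survives. My plan is to first reduce to the ergodic case: the measures of maximal entropy form a simplex whose extreme points are ergodic and still have entropy $h$, so it is enough to treat ergodic $\mu$. For such $\mu$, the Shannon--McMillan--Breiman theorem gives $\mu([x_{[0,n-1]}]_0)=e^{-hn+o(n)}$ along $\mu$-typical points, while Birkhoff's theorem gives that a $\mu$-typical window $x_{[0,N-1]}$ contains about $\mu([v]_0)N$ occurrences of $v$. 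Replacing a chosen subfamily of pairwise non-overlapping occurrences of $v$ by $w$ produces a controlled number of distinct admissible words of comparable length, and weighing their total count (respectively total measure) against $|\mathcal{L}_N(X)|\approx e^{hN}$ is what should yield $g(v)\le g(w)$, with maximality of entropy converting the word counts into the measure bound. The main obstacle I anticipate is twofold: controlling the overlaps between occurrences of $v$ (and of $w$) so that the replacement stays injective and length-preserving up to the intended factor, and extracting the \emph{sharp} constant $e^{h(|w|-|v|)}$ rather than a lossy one, since it is here that the length difference $|w|-|v|$ must be booked against entropy exactly, and where the argument is most delicate.
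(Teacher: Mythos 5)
The theorem you were asked to prove is not actually proved in this paper: it is imported verbatim from \cite{garcia2019extender}, so the only meaningful comparison is with that reference's argument, which is indeed the scheme you outline (reduce to ergodic measures of maximal entropy, use Shannon--McMillan--Breiman and Birkhoff to produce $e^{(h-\epsilon)N}$ typical words with the right occurrence statistics, replace occurrences of $v$ by $w$, and count against $|\mathcal{L}_m(X)|=e^{hm+o(m)}$). Your preliminary reductions are correct: extender containment does give the finite replacement property ($pvs\in\mathcal{L}(X)\Rightarrow pws\in\mathcal{L}(X)$), and the ergodic reduction works because a.e.\ ergodic component of a measure of maximal entropy is again one, and the inequality integrates over components. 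But the proposal stops exactly where the proof starts: the two ``anticipated obstacles'' you name at the end are the entire content of the theorem, and the specific device you propose --- replacing ``a chosen subfamily of pairwise non-overlapping occurrences of $v$'' --- provably cannot produce the stated constant.

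Here is a concrete failure. In the full shift on $\{0,1\}$ with $\mu$ the uniform Bernoulli measure (the unique MME, $h=\log 2$), take $v=00$ and $w=1$; extender containment is trivial and the theorem's bound $\mu(00)\le \mu(1)e^{h(1-2)}=\tfrac14$ holds with \emph{equality}, so any loss is fatal. Yet in a $\mu$-typical word of length $N$, although $v$ occurs $\approx N/4$ times, the largest pairwise-disjoint family of occurrences has size only $\approx N\sum_{m\ge 2}2^{-(m+2)}\lfloor m/2\rfloor = N/6$, because occurrences of $00$ cluster inside runs of zeros and a run of length $m$ supports only $\lfloor m/2\rfloor$ disjoint copies. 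A replacement argument whose lower bound counts subsets of a fixed disjoint family therefore proves only $\tfrac16\le\tfrac14$, i.e.\ the inequality with $\mu(v)$ replaced by the (generally strictly smaller) density of a maximal disjoint family --- not the theorem. The missing idea is to make the replacement density vanish: replace $j=\alpha N$ occurrences chosen pairwise-disjointly \emph{from among all} $\approx\mu(v)N$ occurrences; a sequential count gives at least $\frac{1}{j!}\prod_{i=0}^{j-1}\bigl(\mu(v)N-Ci\bigr)$ such choices ($C$ depending only on $|v|$), which agrees with $\binom{\mu(v)N}{j}$ up to $e^{O(\alpha^2 N)}$; bound the multiplicity of the map by $\binom{\mu(w)N+O(\alpha N)}{j}$ (old $w$-occurrences plus accidental ones created near insertions); compare with $|\mathcal{L}_{N+j(|w|-|v|)}(X)|=e^{hN+hj(|w|-|v|)+o(N)}$; then divide by $\alpha N$ and take $N\to\infty$, then $\alpha\to 0$, then $\epsilon\to 0$. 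All overlap and multiplicity errors are second order in $\alpha$, so the sharp factor $e^{h(|w|-|v|)}$ survives; with a fixed disjoint family, as in your plan, it does not.
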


\begin{Theorem}
	\label{entropyminimal}
	Every bounded density shift (with $\alpha_{f}>0$) is entropy minimal.
\end{Theorem}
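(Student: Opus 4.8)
The plan is to prove the equivalent measure-theoretic statement: every measure of maximal entropy $\mu$ of $X_f$ is fully supported. Since $\alpha_f>0$ we have $h_{top}(X_f)>0$, so Theorem~\ref{GRP} is available with $h=h_{top}(X_f)$. The whole argument rests on one structural fact, which I would isolate as a short lemma: inserting a $0$ into a point of $X_f$ again produces a point of $X_f$. Indeed, a window of length $p$ in the new point either misses the inserted coordinate, and then duplicates a window of the original point, or contains it, in which case its letter-sum equals that of a length-$(p-1)$ window of the original and is therefore $\le f(p-1)\le f(p)$.

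Free insertion of zeros produces exactly the extender inclusions that Theorem~\ref{GRP} converts into lower bounds for $\mu$. First, since $\mu([0]_0)=\max_i\mu([i]_0)\ge|\mathcal A|^{-1}>0$ (heredity gives $E_{X_f}(i)\subseteq E_{X_f}(0)$, hence $\mu([i]_0)\le\mu([0]_0)$) and $E_{X_f}(0)\subseteq E_{X_f}(0^k)$, Theorem~\ref{GRP} yields $\mu([0^k]_0)>0$ for every $k$ (comparing instead with the empty word even gives $\mu([0^k]_0)\ge e^{-k\,h_{top}(X_f)}$). Next, writing $\widehat w$ for the word obtained from $w$ by deleting all its $0$'s, the word $w$ is $\widehat w$ with zeros inserted, so $E_{X_f}(\widehat w)\subseteq E_{X_f}(w)$ and Theorem~\ref{GRP} gives $\mu([w]_0)\ge\mu([\widehat w]_0)\,e^{-(|w|-|\widehat w|)\,h_{top}(X_f)}$. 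This reduces the theorem to positivity of $\mu([w]_0)$ for zero-free words $w$, i.e. words all of whose letters are $\ge 1$.

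For these maximally dense words the extender inequalities run the wrong way: heredity and insertion only compare a dense word to sparser words with larger extender sets, giving upper bounds on its measure, never the lower bound we need. Here I would argue by contradiction. If $\mu([w]_0)=0$ for some zero-free $w$ of length $m$, then by $\sigma$-invariance $\mu$ is carried by the proper subshift $Y_w=\{x\in X_f : w\text{ does not occur in }x\}$, whence $h_{top}(Y_w)=h_{top}(X_f)$. I would contradict this with a strict entropy drop. The key observation is that, for $L>f(m)/\alpha_f$, the padded word $g_w=0^{L}w0^{L}$ lies in $\mathcal G$: any prefix or suffix of $g_w$ has letter-sum at most $\sum_i w_i\le f(m)$ spread over a length $\ge L$, hence average $<\alpha_f$. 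Since $\mathcal G$ has specification with gap $0$ (Lemma~\ref{lemaespecifiacion}), copies of $g_w$ may be concatenated freely with other words of $\mathcal G$; splicing such copies at a positive density of sites inside long admissible words produces an exponentially large family of words of $X_f$ that contain $w$, so that forbidding $w$ deletes an exponential factor and forces $h_{top}(Y_w)<h_{top}(X_f)$, the desired contradiction.

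I expect this last step to be the main obstacle. The difficulty is quantitative: making the multiplicity of $g_w$-insertions genuinely exponential while keeping every window within the bound $f(p)$ relies on the slack $f(p)-\alpha_f p$ created by the zero padding, and for a general canonical $f$ this slack need not grow linearly in $p$, so the padding length $L$ and the density of insertion sites must be chosen with care and the count must be shown to beat the subexponential error terms coming from the $\mathcal B$-parts of the decomposition $\mathcal L(X_f)=\mathcal B\mathcal G\mathcal B$. Turning this heuristic exponential gain into a rigorous strict inequality between $h_{top}(Y_w)$ and $h_{top}(X_f)$ is where the real work lies; the measure-theoretic reductions of the first two paragraphs are routine once the zero-insertion lemma is in hand.
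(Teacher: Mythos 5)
Your first two paragraphs are correct: the zero-insertion lemma is valid, the extender inclusions $E_{X_f}(i)\subseteq E_{X_f}(0)$, $E_{X_f}(0)\subseteq E_{X_f}(0^k)$ and $E_{X_f}(\widehat w)\subseteq E_{X_f}(w)$ all hold, and Theorem~\ref{GRP} then reduces the theorem to showing $\mu([w]_0)>0$ for zero-free words $w$. But the third paragraph, which you yourself flag as the main obstacle, is where the proof actually lives, and what you sketch there does not close. First, the reduction is essentially circular: ``$\mu([w]_0)=0$ forces $h_{top}(Y_w)=h_{top}(X_f)$, so it suffices to show forbidding $w$ strictly drops entropy'' is just a restatement of entropy minimality for the word $w$; you have reduced the theorem for general words to the theorem for zero-free words, and the latter is left unproved. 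Second, the splicing heuristic faces a structural obstruction, not merely a quantitative one: specification of $\mathcal{G}$ (Lemma~\ref{lemaespecifiacion}) only lets you concatenate words of $\mathcal{G}$, and inserting $g_w=0^Lw0^L$ into an \emph{arbitrary} word of $\mathcal{L}(X_f)$ can leave the language, because the needed slack $f(p)-\alpha_f p$ may stay bounded (e.g.\ $f(p)=\lfloor \alpha p+\sqrt p\rfloor$) while the words carrying the entropy of $X_f$ may have letter-sums hugging $f$. To run the count inside $\mathcal{G}$ alone you would need $h(\mathcal{B})<h_{top}(X_f)$, which is precisely the unverified hypothesis of Theorem~\ref{thm:main} (the paper poses its general validity as an open question), so your argument would only establish entropy minimality for the shifts already covered there, not for all bounded density shifts with $\alpha_f>0$.

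The paper's proof avoids counting entirely and stays inside the extender-set framework; the missing idea is to compare $w$ against a \emph{positive-measure} word of equal letter-sum rather than against perturbations of $w$. Since $h_{top}(X_f)>0$, one has $\mu([1]_0)>0$, and Poincar\'e recurrence produces a word $v'$ with $\mu([v']_0)>0$ and $\sum_i v'_i>\sum_i w_i$; heredity lets one lower letters of $v'$ to get $v$ of the same length with $\sum_i v_i=\sum_i w_i$, and $E_{X_f}(v')\subseteq E_{X_f}(v)$ gives $\mu([v]_0)>0$ by Theorem~\ref{GRP}. The key step is then the inclusion $E_{X_f}(v)\subseteq E_{X_f}(0^{|v|}w0^{|v|})$: any window of $y_{(-\infty,0]}0^{|v|}w0^{|v|}y_{[1,\infty)}$ that is not contained in the padded block has letter-sum bounded by that of a window of the same length in $y_{(-\infty,0]}vy_{[1,\infty)}$, because the block's total sum equals $\sum_i v_i$ and the zero padding of length $|v|$ absorbs the alignment. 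A final application of Theorem~\ref{GRP} then gives $\mu([w]_0)\geq \mu([0^{|v|}w0^{|v|}]_0)>0$ directly, with no separate treatment of zero-free words and no entropy comparison for $Y_w$. If you want to salvage your outline, this is the mechanism you need for your third paragraph: lower bounds on the measure of dense words come from Theorem~\ref{GRP} applied against recurrent positive-measure words, not from a combinatorial entropy-drop argument.
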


\begin{proof}
	Let $X_f$ be a bounded density shift, $\mu \in M(X_f, \sigma)$ a measure of maximal entropy and $w \in \mathcal{L}(X_f)$. Since the topological entropy of $X_f$ is positive then $1 \in \mathcal{L}(X_f)$, and $\mu([1]_0)>0$ (otherwise $\mu([0]_0)=1$ and the entropy cannot be positive). By  Poincar\'e's recurrence theorem, there exists $v'\in \mathcal{L}(X_f)$ for which $\mu([v']_0) > 0$ and
	\[
	\sum_{i=1}^{|v'|} v'_i > \sum_{i=1}^{|w|} w_i.
	\]
	We can then define $v$ which is coordinatewise less than or equal to $w$ with 
	$$\sum_{i=1}^{|v|} v_i=\sum_{i=1}^{|w|} w_i.
	$$
	By the fact that $X_f$ is hereditary, $E_{X_f}(v') \subset E_{X_f}(v)$, and so by Theorem~\ref{GRP}, $\mu([v])\geq \mu([v'])>0$.
	
	%\subseteq [1]$, $\mu([v]) = \mu([1])>0$ such that, for each $x \in [v]$ there exists a sequence $n_1 < n_2 < \ldots$ of natural numbers with $\sigma^{n_i}(x) \in [v]$. 
	
	We want to prove that $E_{X_f}(v) \subseteq E_{X_f}(0^{\vert v \vert} w 0^{\vert v \vert})$. Let $y \in E_{X_f}(v)$, with $x = y_{(-\infty, 0]}.vy_{[1,\infty)}\in X_f$, and $x' = y_{(-\infty, 0]}.0^{\vert v \vert }w0^{\vert v \vert}y_{[1,\infty)}$.
	Let $n<m\in \mathbb{Z}$. We consider two cases, when $x'_{[n,m]}$ is a subword of $0^{\vert v \vert} w 0^{\vert v \vert}$ and when it is not. If $x'_{[n,m]}$ is subword of $0^{\vert v \vert} w 0^{\vert v \vert}$, then $x'_{[n,m]} \in \mathcal{L}(X_f)$ since $w \in \mathcal{L}(X_f)$ (\cite[Lemma 2.3]{stanley}).  
	%%%If $x'_{[n,m]}$ is a subword of $0^{\vert v \vert}w0^{\vert v \vert}$, then $x'_{[n,m]} \in \mathcal{L}(X_f)$ because $w \in \mathcal{L}(X_f)$ and $f$ is nondecreasing. 
	Otherwise, there exists $p\in \mathbb{Z}$ such that 
	
	$$
	\sum_{i=n}^{m} x'_i \leq \sum_{i=n+p}^{m+p} x_i \leq f(m-n). 
	$$
	%%\rp{(NOTE: I DON'T UNDERSTAND THE EXISTENCE OF $p$, AND I DON'T UNDERSTAND WHY THIS IS ENOUGH TO PROVE $x'_{[n,m]} \in \mathcal{L}(X_f)$; DON'T WE HAVE TO CONSIDER ALL SUBWORDS TO PROVE IT'S IN $X_f$?)}

	This implies that $x'_{[n,m]} \in \mathcal{L}(X_f)$. Thus, $x' \in X_f$, and so $y \in E_{X_f}(0^{\vert v \vert}w0^{\vert v \vert})$. Since $y$ was arbitrary, $E_{X_f}(v) \subseteq E_{X_f}(0^{\vert v \vert}w0^{\vert v \vert})$. Using Theorem \ref{GRP} we conclude that 
	$$
	\mu([w]_0)\geq \mu([0^{\vert v \vert}w0^{\vert v \vert}]_0) \geq \mu([v]_0)e^{-{h_{top}(X)(|w|-|v|)}}>0.
	$$
	Therefore, $\mu$ is fully supported.

\end{proof}

%A bounded density shift has positive topological entropy if and only if $\alpha_{f}>0$ (see \cite[Theorem 12]{kwietniak2011topological}) if and only if it is coded (determined by a labeled irreducible graph with possibly countably
%many vertices)  \cite[Theorem 3.1]{stanley}. 
%%DEFINIR SYNCHRONIZED

Let $X$ be a subshift. A word $v \in \mathcal{L}(X)$ is \emph{intrinsically synchronizing} if $uv,vw \in \mathcal{L}(X)$ then $uvw \in \mathcal{L}(X)$.

A subshift is \emph{synchronized} if there exists $v \in \mathcal{L}(X)$ such that $v$ is an intrinsically synchronizing word.

Every entropy minimal synchronized subshift is intrinsically ergodic \cite{thomsen2006ergodic,garcia2019extender} and every synchronized subshift is coded \cite{fiebig1992covers}. Hence, we obtain the following corollary. 
\begin{corollary}
	Every synchronized bounded density shift is intrinsically ergodic. 
\end{corollary}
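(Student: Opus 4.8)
The plan is to read this off directly from the two ingredients assembled immediately before the statement, treating it as a genuine corollary rather than a result requiring fresh work. The two inputs are Theorem~\ref{entropyminimal} (every bounded density shift with $\alpha_f > 0$ is entropy minimal) and the cited fact that every entropy-minimal synchronized subshift is intrinsically ergodic. The strategy is simply to verify both hypotheses of the cited theorem for an arbitrary synchronized $X_f$ and invoke it, after first clearing away the degenerate regime that Theorem~\ref{entropyminimal} does not cover.

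First I would dispose of the case $\alpha_f = 0$ separately, since Theorem~\ref{entropyminimal} is stated only for $\alpha_f > 0$. As already observed at the start of the proof of Theorem~\ref{thm:main}, when $\alpha_f = 0$ the upper density of nonzero coordinates is $0$ for every point, so the unique $\sigma$-invariant Borel probability measure is the point mass $\delta_{\left.^\infty 0^\infty\right.}$. With a single invariant measure there is a single measure of maximal entropy, and intrinsic ergodicity is immediate in this case regardless of whether $X_f$ is synchronized.

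Second, I would handle the main case $\alpha_f > 0$. Here Theorem~\ref{entropyminimal} applies verbatim and gives that $X_f$ is entropy minimal. By hypothesis $X_f$ is synchronized, so both hypotheses of the cited result (entropy minimality together with the synchronized property) are met, and we conclude that $X_f$ is intrinsically ergodic. This completes both cases and hence the proof.

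I do not expect any serious obstacle: the substantive analytic content—establishing that every measure of maximal entropy is fully supported—was already carried out in Theorem~\ref{entropyminimal} via the extender-set comparison of Theorem~\ref{GRP}. The only point demanding a moment's care is that Theorem~\ref{entropyminimal} explicitly assumes $\alpha_f > 0$, so one must not forget the boundary case $\alpha_f = 0$; once its triviality is noted, the corollary follows by direct combination of the quoted facts.
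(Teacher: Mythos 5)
Your proof is correct and follows essentially the same route as the paper: combine Theorem~\ref{entropyminimal} with the cited fact that every entropy-minimal synchronized subshift is intrinsically ergodic. Your explicit treatment of the degenerate case $\alpha_f = 0$ (where the unique invariant measure is $\delta_{\left.^\infty 0^\infty\right.}$) is a small point the paper leaves implicit, and handling it is appropriate since Theorem~\ref{entropyminimal} is stated only for $\alpha_f > 0$.
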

%TODO BOUNDED DENSITY ES ENTROPY MINIMAL
%ES DECIR CADA MEDIDA DE ENTROPIA MAXIMA TIENE SOPORTE COMPLETO.  a m

%\subsection{Surjunctivity}

Another application of entropy minimality is surjunctivity. 
Given a subshift $X$, we say $\phi:X\rightarrow X$ is a \emph{shift-endomorphism} if it's continuous and it commutes with the shift. If a shift-endomorphism is bijective we say it is a \emph{shift-automorphism}. 

A subshift $X$ is said to be \emph{surjunctive} if every injective shift-endomorphism of $X$ is a shift-automorphism. Every full shift is surjunctive (\cite[Chapter 3]{coornaert2010cellular}. The following result is known (e.g. see \cite{ceccherini2021expansive}) but it is not explicitly stated. We write the proof since the argument is simple.  
\begin{Lemma}
	Every entropy minimal subshift is surjunctive. 
\end{Lemma}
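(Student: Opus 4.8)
The plan is to show that the image of an injective shift-endomorphism $\phi$ is a subshift of $X$ that is topologically conjugate to $X$, hence has the same topological entropy as $X$; entropy minimality then forces this image to be all of $X$, giving surjectivity.

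First I would let $\phi : X \to X$ be an injective shift-endomorphism and set $Y = \phi(X)$. Since $X$ is compact and $\phi$ is continuous, $Y$ is compact and therefore closed. Because $\phi$ commutes with $\sigma$ and $\sigma(X) = X$, we have $\sigma(Y) = \sigma(\phi(X)) = \phi(\sigma(X)) = \phi(X) = Y$, so $Y$ is $\sigma$-invariant. Thus $Y$ is a subshift contained in $X$. This is the only genuine verification in the argument: that the image qualifies as a subshift so that the definition of entropy minimality can be applied to it.

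Next, $\phi : X \to Y$ is a continuous bijection between compact metrizable spaces, hence a homeomorphism, and it intertwines the shift maps via $\phi \circ \sigma = \sigma \circ \phi$. Therefore $(X, \sigma)$ and $(Y, \sigma|_Y)$ are topologically conjugate. Since topological entropy is a conjugacy invariant, $h_{top}(Y) = h_{top}(X)$.

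Finally, entropy minimality asserts that every subshift strictly contained in $X$ has strictly smaller topological entropy. As $Y \subseteq X$ satisfies $h_{top}(Y) = h_{top}(X)$, it cannot be a proper subshift, so $Y = X$ and $\phi$ is surjective. A continuous bijection of compact metrizable spaces has continuous inverse, and $\phi^{-1}$ again commutes with $\sigma$, so $\phi$ is a shift-automorphism, as desired. There is no real obstacle beyond the standard facts used above (closedness of continuous images of compact sets, and conjugacy-invariance of entropy); the entire content of the lemma is the observation that a conjugate copy of $X$ sitting inside $X$ must carry full entropy and therefore, by entropy minimality, cannot be proper.
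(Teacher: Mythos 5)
Your proof is correct and follows exactly the same argument as the paper: the image $\phi(X)$ is a subshift conjugate to $X$, conjugacy-invariance of entropy forces $h_{top}(\phi(X)) = h_{top}(X)$, and entropy minimality then gives $\phi(X) = X$. You simply spell out the routine verifications (closedness and invariance of the image, and that a continuous bijection of compact spaces is a homeomorphism) that the paper leaves implicit.
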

\begin{proof}
	Let $X$ be a subshift and $\phi:X\rightarrow X$ an injective shift-endomorphism. This implies that $\phi(X)$ is a subshift which is topologically conjugate to $X$. Since topological entropy is conjugacy-invariant, $\phi(X)$ has the same topological entropy as $X$. If $X$ is entropy minimal then $\phi(X)=X$. 
\end{proof}

Using this and Theorem \ref{entropyminimal} we obtain the following. 
\begin{corollary}
	Every bounded density shift with positive topological entropy is surjunctive. 
\end{corollary}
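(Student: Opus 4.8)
The plan is to combine the two immediately preceding results, so there is essentially no new work to do beyond correctly chaining the hypotheses. The statement concerns bounded density shifts with positive topological entropy, whereas Theorem~\ref{entropyminimal} is phrased in terms of the limiting gradient condition $\alpha_{f}>0$. The first step is therefore to bridge these two hypotheses: as recorded in Section 2.4 (citing \cite[Theorem 12]{kwietniak2011topological}), a bounded density shift has positive topological entropy if and only if $\alpha_{f}>0$. Hence any $X_f$ satisfying the hypothesis of the corollary automatically satisfies the hypothesis of Theorem~\ref{entropyminimal}.

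With that translation in hand, I would apply Theorem~\ref{entropyminimal} to conclude that $X_f$ is entropy minimal, and then apply the preceding Lemma, which asserts that every entropy minimal subshift is surjunctive. Composing the two implications yields the claim directly.

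Since both ingredients are already established, there is no real obstacle; the only point requiring care is the hypothesis bookkeeping just described, namely making sure that ``positive topological entropy'' is indeed equivalent to the condition $\alpha_{f}>0$ under which entropy minimality was proved. I would write the proof as a single short paragraph invoking these three facts in order.

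\begin{proof}
	Let $X_f$ be a bounded density shift with positive topological entropy. As noted in Section~2.4, this is equivalent to $\alpha_{f}>0$, so Theorem~\ref{entropyminimal} applies and $X_f$ is entropy minimal. By the preceding Lemma, every entropy minimal subshift is surjunctive, and therefore $X_f$ is surjunctive.
\end{proof}
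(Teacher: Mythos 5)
Your proof is correct and matches the paper's approach exactly: the paper derives this corollary by combining the surjunctivity-from-entropy-minimality lemma with Theorem~\ref{entropyminimal}, exactly as you do. Your extra care in noting the equivalence between positive topological entropy and $\alpha_{f}>0$ (from Section~2.4) is a sound piece of bookkeeping that the paper leaves implicit.
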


\section{Universality}

{A dynamical system} is said to be universal if every system with smaller entropy can be embedded in the original system {(this can be studied either in the topological or measure-theoretic category)}. {For instance, measure-theoretic} universality of the full shift follows from Krieger's generator theorem. {Results about both types (topological and measure-theoretical) of universality have been proved} for systems with specification in \cite{quas2016ergodic,burguet2020topological,chandgotia2021borel}, and {we can prove a topological universality result for bounded density subshifts as well}. We first need some basic definitions about topological dynamical systems. 

A \emph{topological dynamical system} is a pair $(X,T)$ where $X$ is a compact metrizable space and $T:X\to X$ is a continuous function. 
Let $(X,T)$ and $(X',T')$ be two topological dynamical systems. We say $X$ and $X'$ are \emph{conjugated} if there exists a homeomorphism $f: X\to X'$ such that $T'\circ f=f\circ T$.

%\textcolor{blue}{I personally would strongly consider not defining topological entropy in general. It's just a black box for these results anyway (i.e. the reader loses nothing if they don't know the definition), and it opens the door for a really unpleasant conversation with the referee; if we needed general topological entropy for this paper, why didn't we define it in the definitions section? For the record, if the referee made this complaint, I'd agree with them. So one possible fix is just to say `entropy can be defined for general TDS, but we do not need the exact definition here.}

{ For any TDS $(X,T)$ one can assign a \emph{topological entropy} $h_{top}(X,T)$. When the system is a subshift the notion coincides with the definition in Section 2.3. For the definition see \cite[Chapter 7]{Walters}.}
%	\[h_{top}(X,T,\mathcal{U})=\lim_{n\to\infty}\frac{1}{{n}}\log N(\mathcal{U}^{{n}}),\]

Let $\gamma\in \mathbb{R}_+$. We say a subshift $X$ is $\gamma$\emph{-universal} if for any TDS with $h_{top}(X_1,T_1)< \gamma$ there
is a subshift $X'\subset X$ such that $(X_1,T_1)$ is conjugated to $(X',\sigma).$
\begin{Theorem}
	[Burguet \cite{burguet2020topological}]
	Every subshift $X$ with specification is $h_{top}({X})$-universal. 
	%Furthermore if for every $n \in \mathbb{N}$ and  $w \in \mathcal{L}_n(X)$ there exist $m \ge n$ and  $ w' \in C_m$ such that $ w' \mid_{[-n,n]} = w$ then $X$ is fully ergodic universal.
\end{Theorem}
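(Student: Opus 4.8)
Because the statement is quoted from Burguet \cite{burguet2020topological}, the aim here is to recall the architecture of his proof rather than to reprove it, and in particular to flag where its real strength lies: the heart of $h_{top}(X)$-universality is an \emph{almost Borel} embedding statement — every free Borel $\mathbb{Z}$-system of entropy strictly below $h_{top}(X)$ admits an almost Borel embedding into $X$ — which Burguet establishes alongside the topological conclusion recorded here, by parallel arguments sharing the same machinery. A direct, hand-built topological (Krieger-type) embedding of each lower-entropy subshift is not what drives the theorem; the correct level of generality is to treat all free Borel systems of smaller entropy at once, modulo a universally null set.

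The measure-theoretic core, in the spirit of the ergodic universality results of Quas and Soo \cite{quas2016ergodic}, comes first. Fix any ergodic system $(Y,S,\nu)$ with $h_\nu(Y,S) < h_{top}(X)$. Specification supplies a large family of words in $\mathcal{L}(X)$ that concatenate with bounded gaps; combined with a Rokhlin-tower / marker decomposition of a $\nu$-typical orbit, one codes orbit segments of $Y$ into admissible points of $X$. The entropy gap $h_{top}(X) - h_\nu > 0$ guarantees that the coding reservoir outnumbers the source blocks, so the coding can be arranged to be injective, producing a Krieger-type measure-theoretic embedding of $(Y,S,\nu)$ into $(X,\sigma)$.

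The main work is to globalize. Using the Borel ergodic decomposition together with Borel-dynamical tools (cf. \cite{chandgotia2021borel}), the per-measure codings are assembled into a single shift-equivariant, Borel-injective map defined off a universally null set. The difficulty is coherence: the marker and specification gluing must be organized simultaneously over all invariant measures, so that injectivity and equivariance persist globally rather than merely $\nu$-almost everywhere. The topological universality stated here is the parallel assertion in the zero-dimensional, expansive setting, where the markers can be taken genuinely continuous and the periodic orbits of the source are matched with the periodic orbits of $X$, which specification makes abundant.

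The step I expect to be the main obstacle is precisely this globalization. The measure-by-measure coding is essentially routine once specification furnishes the word reservoir; the real content is uniform control of injectivity and continuity across every invariant measure, with periodic orbits handled correctly — which is where Burguet's technical effort is concentrated, and why a naive topological embedding argument is inadequate.
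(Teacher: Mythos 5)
The paper does not prove this statement at all: it is quoted as an attributed theorem of Burguet \cite{burguet2020topological} and used as a black box to deduce the corollary that $X_f$ is $h_{top}(X_{\alpha_f})$-universal. So there is no internal proof to compare against, and the only question is whether your text would stand on its own as a proof. It would not: what you have written is a survey of where Burguet's argument presumably lives --- marker/Rokhlin decompositions, a specification-supplied reservoir of words, injectivity from the entropy gap, globalization over the ergodic decomposition --- but every one of these steps is named rather than executed. The two points you yourself single out as the crux (coherence of the coding across all invariant measures, and the treatment of periodic orbits) are precisely the ones left as assertions, so the proposal adds no mathematical content beyond the citation the paper already makes. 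If the intent is attribution, a citation suffices, as the paper's own treatment shows; if the intent is a proof, essentially all of the work remains to be done.

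One concrete caution if you do try to flesh this out: with the paper's definition of $\gamma$-universality, the source system $(X_1,T_1)$ must be conjugated to a subshift of $X$, which already forces $(X_1,T_1)$ to be expansive and zero-dimensional, and periodic points are a genuine obstruction. Specification only guarantees periodic orbits of all sufficiently large periods: a mixing shift of finite type whose cycle lengths are, say, $2$ and $3$ is specified but has no fixed point, so the one-point system (entropy $0 < h_{top}(X)$) cannot embed, and the statement cannot be taken literally for \emph{every} lower-entropy system without an aperiodicity-type hypothesis --- hypotheses that Burguet's actual theorem carries and that the paper's paraphrase suppresses. Your claim that ``the periodic orbits of the source are matched with the periodic orbits of $X$, which specification makes abundant'' is therefore the step of your sketch most likely to fail as stated, and is exactly where the suppressed hypotheses would have to enter.
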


%Let $X$ be a subshift and $g:\mathbb{N} \to \mathbb{N}$ a function so that $\lim_{n\to \infty}\frac{g(n)}{n}=0$.

%We say that  $\mathcal{C} = ( C_n)_{n=1}^\infty$ with $ C_n \subset \mathcal{L}_n(X)$ is a  \emph{flexible sequence of patterns} for $X$ (with respect to $g=0$) if  for every  $k<n \in \mathbb{N}$ any $k$-spaced subset $K$ contained in $[k-n,...,n-k]$ and $W \in ( C_k)^K$ there exists $w \in C_n$ so that 
%\begin{equation}\sigma^{i}(w)\mid_{[-k,k]}=W(i) \mbox{ for all } i \in K.\end{equation}
%We say that $\mathcal{C}$ as above is a \emph{flexible marker sequence of patterns} if in addition\begin{equation}\forall x \in X \mbox{ and } n \in\mathbb{N}, \mbox{ the set } \left\{i \in \mathbb{Z}:\sigma^{i}(x)\mid_{[-n,n]}\in  C_n \right\}\mbox{ is }  n\mbox{-spaced}. \end{equation}

Let $\alpha \in \mathbb{R}_+$. We define $X_{\alpha}$ as the bounded density shift obtained with the function $f(n)=\lfloor n\alpha \rfloor$. Using \cite[Theorem 1.3]{stanley} we have that $X_\alpha$ has specification. 
%The shift $X_{\alpha}$ is the smallest bounded density shift with limiting gradient $\alpha_f$.  

Given a bounded density shift $X_f$, one can check that %$\mathcal{G}=\mathcal{L}(X_{\alpha_f})$ 
$X_{ \alpha _f} \subset X_f$. 
%We have that \alpha_f \leq \frac{f(p)}{p}$ for every $p\in \mathbb{N}$. 
%\textcolor{blue}{This is quite false; you need to consider all subwords, not just prefixes. For instance, if $\alpha = 0.5$, then $0011$ satisfies your definition, but is clearly not in the language of $X_\alpha$. But do we even need this characterization??? I would say that the argument is self-contained in the following and you should just delete the sentence I'm complaining about.} 
Let $x \in X_{\alpha_f}$, then for every $i \in \mathbb{Z}$ and for every $p \in \mathbb{N}$ we have 

\begin{equation*}
	\sum_{r=i}^{i+p-1} x_r \leq \left\lfloor p \alpha_f \right\rfloor \leq p \alpha_f \leq p \dfrac{f(p)}{p} \leq f(p).
\end{equation*}
Therefore $x \in X_f$ and $X_{\alpha_f} \subset X_f$.

\begin{corollary}
	Let $X_f$ be a bounded density shift. We have that $X_f$ is $h_{top}(X_{\alpha_f})$-universal. 
\end{corollary}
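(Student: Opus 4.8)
The plan is to reduce the statement to Burguet's theorem applied to the specified subshift $X_{\alpha_f}$, exploiting the inclusion $X_{\alpha_f} \subset X_f$ established in the preceding paragraphs. The central observation is that $\gamma$-universality is monotone with respect to containment of subshifts: if $Y \subset X$ are subshifts and $Y$ is $\gamma$-universal, then $X$ is also $\gamma$-universal, because any subshift embedded into $Y$ is automatically a subshift of $X$, and the same conjugacy witnesses the embedding into $X$.

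Carrying this out, I would first invoke the fact, recorded above via \cite[Theorem 1.3]{stanley}, that $X_{\alpha_f}$ has specification. Applying Burguet's theorem \cite{burguet2020topological} to $X_{\alpha_f}$ then yields that $X_{\alpha_f}$ is $h_{top}(X_{\alpha_f})$-universal. Unwinding the definition, this means that for every topological dynamical system $(X_1, T_1)$ with $h_{top}(X_1, T_1) < h_{top}(X_{\alpha_f})$ there exists a subshift $X' \subset X_{\alpha_f}$ such that $(X_1, T_1)$ is conjugate to $(X', \sigma)$. The next step is to observe that $X'$ is also a subshift of $X_f$: since $X' \subset X_{\alpha_f} \subset X_f$ and $X'$ is closed and shift-invariant, it is a subshift contained in $X_f$, and the very same conjugacy shows that $(X_1, T_1)$ embeds into $X_f$. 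As $(X_1, T_1)$ was arbitrary with entropy below $h_{top}(X_{\alpha_f})$, this establishes that $X_f$ is $h_{top}(X_{\alpha_f})$-universal.

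There is essentially no obstacle here, since the substantive work has already been done in verifying that $X_{\alpha_f}$ is a specified subshift of $X_f$; the remaining content is the trivial monotonicity of universality under inclusion. The only point meriting a moment's care is the compatibility of alphabets, namely that $X_{\alpha_f}$ genuinely sits inside $X_f$ as a set of sequences, which follows from $\alpha_f \leq f(1)$ together with the coordinatewise inclusion already checked in the excerpt.
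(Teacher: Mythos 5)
Your proposal is correct and follows essentially the same route as the paper: the paper establishes that $X_{\alpha_f}$ has specification (hence is $h_{top}(X_{\alpha_f})$-universal by Burguet's theorem) and that $X_{\alpha_f} \subset X_f$, and the corollary follows by exactly the monotonicity-under-inclusion observation you make explicit. Your write-up merely spells out the final step that the paper leaves implicit.
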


\bibliographystyle{plain}
\bibliography{bibliography}
\end{document}